\numberwithin{equation}{section}
\newtheorem{theorem}{Theorem}[section]
\newtheorem{lemma}[theorem]{Lemma}
\newtheorem{corollary}[theorem]{Corollary}
\theoremstyle{definition}
\newtheorem{definition}[theorem]{Definition}
\newtheorem{example}[theorem]{Example}
\newtheorem{remark}[theorem]{Remark}
\newtheorem{condition}[theorem]{Condition}
\newcommand{\E}{\mathbf{E}}
\newcommand{\R}{\mathbb{R}}
\newcommand{\Sphere}{\mathbb{S}^{d-1}}
\newcommand{\bv}{\mathbf{V}}
\newcommand{\N}{\mathbb{N}}
\newcommand{\sM}{\mathcal{M}}
\newcommand{\sN}{\mathcal{N}}
\newcommand{\fS}{\mathfrak{S}}
\newcommand{\eps}{\varepsilon}
\renewcommand{\P}{\mathbf{P}}
\newcommand{\Prob}[1]{\mathbf P\{#1\}}
\newcommand{\vd}{\xrightarrow{d}}
\begin{document}

\title[Convergence to Scale-Invariant Poisson Processes]{Convergence to Scale-Invariant Poisson Processes and Applications in Dickman Approximation}
\author{Chinmoy Bhattacharjee}
\address{IMSV, University of Bern}
\email{chinmoy.bhattacharjee@stat.unibe.ch}

\author{Ilya Molchanov}
\address{IMSV, University of Bern}
\email{ilya.molchanov@stat.unibe.ch}

\date{\today}

\thanks{Supported by the Swiss National Science Foundation
Grant No.\ 200021\_175584.}

\subjclass[2010]{Primary: 60G55, Secondary: 11K99, 60F05, 60G57}
\keywords{Poisson processes, Vague convergence, Scale invariance, Random measures, Dickman distributions.}

\begin{abstract}
  We study weak convergence of a sequence of point processes to a
  scale-invariant simple point process. For a deterministic sequence
  $(z_n)_{n\in\mathbb{N}}$ of positive real numbers increasing to infinity
  as $n \to \infty$ and a sequence $(X_k)_{k\in\mathbb{N}}$ of independent
  non-negative integer-valued random variables, we consider the
  sequence of point processes
  \begin{equation*}
  \nu_n=\sum_{k=1}^\infty X_k \delta_{z_k/z_n}, \quad n \in \mathbb{N},
  \end{equation*}
  and prove that, under some general conditions, it converges
  vaguely in distribution to a scale-invariant Poisson process
  $\eta_c$ on $(0,\infty)$ with the intensity measure having the
  density $ct^{-1}$, $t\in(0,\infty)$. An important motivating example
  from probabilistic number theory relies on choosing $X_k \sim {\rm
  	Geom}(1-1/p_k)$ and $z_k=\log p_k$, $k \in  \mathbb{N}$, where $(p_k)_{k \in
  	\mathbb{N}}$ is an enumeration of the primes in increasing order. We
  derive a general result on convergence of the integrals $\int_0^1
  t \nu_n(dt)$ to the integral $\int_0^1 t \eta_c(dt)$, the latter having a
  generalized Dickman distribution, thus providing a new way of
  proving Dickman convergence results.
  
  We extend our results to the multivariate setting and provide
  sufficient conditions for vague convergence in distribution for a
  broad class of sequences of point processes obtained by mapping the points from $(0,\infty)$ to $\mathbb{R}^d$ via multiplication by i.i.d.\ random vectors. In
  addition, we introduce a new class of multivariate Dickman
  distributions which naturally extends the univariate setting.
\end{abstract}

\maketitle

\section{Introduction}
\label{sec:introduction}

Consider a locally compact separable metric space $S$ with Borel
$\sigma$-algebra $\fS$.  Let $\sM(S)$ denote the space of all locally
finite non-negative measures on $S$. This space is endowed with the
\textit{vague} topology generated by assuming continuity of the
integration maps $\mu \mapsto \mu f=\int_S f(x) \mu(dx)$ for all $f$
from the family $\widehat{C}_S$ of bounded non-negative continuous
functions on $S$ with relatively compact support.  A \emph{random measure}
$\xi$ is a random element in $\sM(S)$, equivalently, $\xi
A=\xi\mathds{1}_A$ is a random variable for each relatively compact
Borel set $A$. The associated notion of convergence in distribution of
random measures is called \emph{vague convergence} in distribution,
denoted hereafter by $\vd$, see \cite{dal:ver03,dal:ver08}. When
considering \emph{point processes}, we restrict ourselves to the subclass
$\sN(S) \subset \sM(S)$ of counting measures (that
is, taking values in $\N_0$, the set of non-negative integers). A random measure $\xi$ is said to have a finite intensity if $\E  (\xi A)<\infty$ for every relatively compact Borel set $A$. 

In this paper, we are particularly interested in vague convergence in
distribution to scale-invariant Poisson processes.  A random measure
$\xi$ on $S$ is \emph{scale-invariant} if its distribution is invariant with
respect to a group of scaling transformations of $S$. %Such transformations are introduced on the so-called star-shaped metric spaces, see \cite{seg:zhao:mein17}. 
Even though convergence to
stationary Poisson processes has been extensively studied in the
literature, studies regarding convergence to scale-invariant processes
seem to be rare. Distributional properties of scale-invariant
Poisson processes on the half-line $(0,\infty)$ are surveyed in
\cite{arr98}. While a simple transformation relates a scale-invariant
Poisson process on $(0,\infty)$ to a stationary Poisson processes on
the line, such a transformation is not readily available in general
Euclidean spaces.

Throughout the sequel, we take $S=\R^d\setminus \{0\}$, $d \in \N$, that is, the Euclidean space with the origin removed. On the half-line, for $c>0$, we denote by $\eta_c$
the scale-invariant Poisson process on $(0,\infty)$ with intensity
measure $ct^{-1}dt$, and we will simply write $\eta$ for $\eta_1$.

Scale-invariant processes naturally arise as limits of point processes
when a scaling is applied to the support points of the point
processes. For measures, this amounts to scaling of their arguments,
namely, the scaling of $\nu\in\sM(S)$ by $t>0$ is defined as 
\begin{equation}
\label{eq:IS}
T_t \nu (A)=\nu(t^{-1}A), \quad A \in \fS.
\end{equation}
We call this operation \textit{intrinsic scaling}. In Section~\ref{sec:intsc}, we show that random measures when intrinsically scaled, naturally yield scale-invariant measures as limits. As an application, we generalize a result in \cite{cov09} proving that the intrinsically scaled process of jump sizes in a pure-jump subordinator converges vaguely in distribution to a scale-invariant Poisson process, and as a consequence, the sum of small jumps in the process converges to a Dickman distribution.

In this paper, our basic objects of interest are point processes on
$(0,\infty)$ of the following type. Let $(z_k)_{k\in\N}$ be a
sequence of positive \emph{deterministic} numbers with $z_n \uparrow \infty$
as $n \to \infty$. For a sequence $(X_k)_{k\in\N}$ of independent
random variables in $\N_0$, define the point process
\begin{equation*}
\nu=\sum_{k=1}^\infty X_k \delta_{z_k},
\end{equation*}
where $\delta_x$ denotes the Dirac measure at $x$. Rescaling
the support points of $\nu$ by $(z_n)_{n \in \N}$ yields the sequence of point processes
\begin{equation}
\label{eq:nun}
\nu_{n}A=T_{z_n}\nu( A)=\nu (z_n^{-1}A), \quad A \in \fS, n\in \N.
\end{equation}
In Section~\ref{sec:conv-scale-invar}, we study the convergence of
such processes; these results are extended to point processes in
multidimensional Euclidean spaces in Section~\ref{sec.hd}.

Our interest in the scale-invariant Poisson process $\eta_c$ also
stems from its connection to the \emph{Dickman distributions}. It is well known that the sum of points of $\eta_c$ lying in the interval $(0,1)$
is distributed as a generalized Dickman random variable denoted
hereafter by $D_c$ for $c>0$, with $D=D_1$ being a standard Dickman
random variable. The generalized Dickman distribution with parameter
$c>0$ can be defined as the unique non-negative fixed point of the
distributional transformation $W \mapsto W^*$ given by
\begin{equation*}
W^*=_d Q^{1/c}(W+1),
\end{equation*}
where $=_d$ denotes equality in distribution and $Q$ is a uniformly
distributed random variable on $[0,1]$ independent of $W$. It was
introduced in the work of Dickman \cite{Di30} in the context of
smooth numbers and since then has appeared, sometimes
curiously, in various areas including probabilistic number theory
\cite{bh17,Pi16}, minimal directed spanning trees \cite{BR04,PW04},
quickselect sorting algorithm \cite{Go17,HT02} and log-combinatorial
structures \cite{ABT,BaNi11}.

Given the various application, not
surprisingly, there have been many works studying weak convergence to Dickman distributions \cite{HT02,PW04,Pi16}
and, more recently, Stein's method has been used to provide
non-asymptotic bounds for Dickman approximations
\cite{ar16,bh17,Go17}. In \cite{Pi18}, Pinsky provided some
general conditions under which certain randomly weighted Bernoulli
sums converge to a generalized Dickman random variable. But, to the
best of our knowledge, there has been no other attempt to characterize the domain of attraction of the Dickman
distributions. Elaborating on \cite{RA:PC}, one aim of this work is to identify a broad class of
random variables which asymptotically behave like a Dickman random
variable. To do this, we make use of the fact that
\begin{displaymath}
D_c=_d \int_0^1 t\eta_c(dt)=\sum_{t \in \eta_c \cap (0,1)}t.
\end{displaymath}
Hence, if a sequence of point processes converges
vaguely in distribution to $\eta_c$, then, under certain natural
additional conditions, sums of their points in the interval $(0,1)$
converge in distribution to the Dickman random variable $D_c$. Thus, our approach via
scale-invariant Poisson processes yields a new tool to prove Dickman
convergences and provides useful insights into why such convergences
occur. We note here that a similar approach concerning limit theorems for point processes in relation to the behaviour of sums of their points has previously been discussed in \cite{AGK16}. Also, the simpler case of Poisson processes converging to $\eta_c$ on $(0,\infty)$ was considered in
\cite{cov09}. Scale-invariant Poisson processes also arise in 
limit theorems for records, see e.g.\ \cite{bas:plan:soul18} and
references therein.  

In Section~\ref{sec:scale-invar-high}, we characterize scale-invariant
Poisson processes in general dimension $d$, and show that any such
process can be obtained by independently multiplying each point of a
scale-invariant Poisson process on $(0,\infty)$ with independent and identically
distributed unit vectors in $\R^d$. Such a characterization naturally leads to a
multivariate generalization of the Dickman distribution. Analogous to the univariate case, these multivariate Dickman distributions are fixed points of a distributional transform 
\begin{displaymath}
W^*=_d Q^{1/c}(W+U),
\end{displaymath}
where $Q$ is a uniform random variable on $[0,1]$ and $U$ a unit random vector in $\R^d$, independent of everything else.

Some results concerning weak convergence of general point processes
(not necessarily scale-invariant) are collected in the
Appendix.

\section{Intrinsic scaling of random measures}\label{sec:intsc}

Let $\widehat \fS \subseteq \fS$ denote the family of relatively compact Borel sets in
$S=\R^d \setminus\{0\}$ for some $d \in \N$. A subclass $\mathcal{U}
\subset \widehat \fS$ is called \textit{dissecting} if every open set can
be expressed as a countable union of sets from $\mathcal{U}$ and every
set in $\widehat \fS$ can be covered by finitely many sets in $
\mathcal{U}$.  Recall that a subclass $\mathcal{I} \subset \widehat \fS$
is a \textit{ring} if it is closed under proper differences and under
finite unions and intersections. In the special case of $(0,\infty)$,
we will often take the dissecting ring $\mathcal{U}$ to be the family of
finite unions of semi-open intervals $(a,b]$ with $0<a<b<\infty$.

Let $(\xi_n)_{n\in\N}$ be a sequence of point processes in $S$.  It is
well known that the vague convergence in distribution $\xi_n \vd \xi$
for a simple $\xi$ follows from the one-dimensional weak convergences $\xi_n A
\vd \xi A$ for all $A$ from the dissecting ring 
\begin{displaymath}
\mathcal{U} \subset \widehat \fS_{\E \xi}
=\{B \in \widehat \fS: \E\; \xi (\partial B)=0\},
\end{displaymath}
where $\partial B$ denotes the boundary of $B$, see e.g.\
\cite[Chapter 4]{kalle17}. A measure $\mu \in \mathcal{M}(S)$ is said to be \emph{scale-invariant} if $T_c \mu= \mu$ for all $c >0$, where $T_c$ is defined at \eqref{eq:IS}. The next result shows that the limit of the sequence of random
measures obtained by intrinsic scalings of
a given random measure $\nu$ is necessarily scale-invariant under some
mild conditions on the normalizing constants. For deterministic
measures, similar results are known, see e.g.\
\cite[Theorem~3.1]{lin:res:roy14}. We write $\Sphere$ for the $d$-dimensional unit sphere and $B_r$ for the closed ball of radius $r>0$ around the origin.

\begin{lemma}\label{lem:1}
	Let $(s_n)_{n\in\N}$ be a sequence of positive real numbers
	increasing to infinity with $\lim_{n \to \infty}s_{n-1}/s_n= 1$, and let
	$\mu, \nu \in \mathcal{M}(S)$ be random measures with finite intensities such that $T_{s_n} \nu \vd
	\mu$ as $n \to \infty$. Then
	$T_t \nu \vd \mu$ as $t \to \infty$,
	and the limiting measure $\mu$ is scale-invariant.
\end{lemma}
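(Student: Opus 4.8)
The plan is to deduce convergence along the continuous parameter $t\to\infty$ from convergence along the discrete sequence $(s_n)$ by a sandwiching argument, and then to derive scale-invariance of $\mu$ by comparing the two limits $T_t\nu\vd\mu$ and $T_{ct}\nu\vd\mu$ obtained along suitable subsequences. Throughout I would work with the criterion recalled just before the statement: for a simple limit it suffices to check one-dimensional weak convergence $\xi_n A\vd \xi A$ for all $A$ in a dissecting ring contained in $\widehat\fS_{\E\mu}$. Since here we are dealing with possibly non-simple random measures, I would instead use the standard fact (see \cite{kalle17,dal:ver08}) that vague convergence in distribution of random measures is equivalent to convergence of Laplace functionals $\E e^{-\nu_n f}\to\E e^{-\mu f}$ for all $f\in\widehat C_S$, or equivalently to convergence of the finite-dimensional distributions $(\nu_n A_1,\dots,\nu_n A_k)\vd(\mu A_1,\dots,\mu A_k)$ along sets $A_i$ that are $\mu$-continuous (i.e.\ $\E\mu(\partial A_i)=0$); the finite intensity assumption guarantees there are plenty of such sets and lets me control expectations.

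The first and main step is the passage from $(s_n)$ to arbitrary $t$. Fix $f\in\widehat C_S$ with support contained in an annulus $B_R\setminus B_r$. Given $t$ large, let $n=n(t)$ be the index with $s_{n-1}<t\le s_n$; then $s_{n-1}/t\le 1\le s_n/t$, and the hypothesis $s_{n-1}/s_n\to1$ forces $s_n/t\to1$ and $s_{n-1}/t\to1$ as $t\to\infty$. The idea is to compare $T_t\nu f$ with $T_{s_n}\nu f$: writing out the scaling, $T_t\nu f=\int f(t^{-1}x)\,\nu(dx)$, so $T_t\nu f$ and $T_{s_n}\nu f$ differ only through replacing $f(t^{-1}\cdot)$ by $f(s_n^{-1}\cdot)$. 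Since $f$ is uniformly continuous with compact support and $s_n/t\to1$, the function $g_t(x):=f(t^{-1}x)-f(s_n^{-1}x)$ converges uniformly to $0$ and is supported in a fixed annulus for $t$ large. Hence $|T_t\nu f-T_{s_n}\nu f|\le \|g_t\|_\infty\,\nu(B_{2R}\setminus B_{r/2})$, and because $\nu$ has finite intensity the right-hand side tends to $0$ in probability (in fact in $L^1$). Combined with $T_{s_n}\nu f\vd\mu f$ along the subsequence, Slutsky's lemma gives $T_t\nu f\vd\mu f$. As $f$ was arbitrary in $\widehat C_S$, convergence of all Laplace functionals yields $T_t\nu\vd\mu$ as $t\to\infty$.

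The second step is scale-invariance. Fix $c>0$. Apply the first step twice: $T_t\nu\vd\mu$ as $t\to\infty$ and, replacing $t$ by $ct$, also $T_{ct}\nu\vd\mu$ as $t\to\infty$ (the continuous-limit statement is exactly what makes this substitution legitimate — this is why the first step is needed even though we only assumed convergence along $(s_n)$). On the other hand, $T_{ct}\nu=T_c(T_t\nu)$ by the semigroup property of the scalings in \eqref{eq:IS}, and the map $\rho\mapsto T_c\rho$ is continuous on $\sM(S)$ in the vague topology (scaling a test function by $c$ again produces a test function with relatively compact support), hence preserves convergence in distribution; therefore $T_c(T_t\nu)\vd T_c\mu$. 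Uniqueness of limits in distribution then gives $T_c\mu =_d \mu$ for every $c>0$, i.e.\ $\mu$ is scale-invariant.

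The step I expect to be the main obstacle is the first one: making the comparison $|T_t\nu f-T_{s_n}\nu f|\to0$ genuinely rigorous requires being careful that the supports stay inside a fixed compact annulus uniformly in $t$ (which is where $s_n/t\to1$ enters, not just $s_n/t\to$ some constant) and that the finite-intensity hypothesis is used to bound $\nu$ of that fixed annulus rather than of a growing set. The scale-invariance conclusion, by contrast, is essentially a soft argument once the continuous-parameter limit is in hand.
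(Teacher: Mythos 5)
Your second step (scale invariance via $T_{ct}\nu=T_c(T_t\nu)$, continuity of $T_c$ under the vague topology, and uniqueness of distributional limits) is exactly the paper's argument. Your first step takes a genuinely different route: the paper proves convergence of the finite-dimensional distributions $T_t\nu(A\times[a,b])$ by sandwiching these counts between $T_{s_{n(t)}}\nu$ of $\eps$-shrunken and $\eps$-enlarged annuli and then letting $\eps\to0$ using $\E\mu[\partial(B_a)\cup\partial(B_b)]=0$, whereas you compare the integrals $T_t\nu f$ and $T_{s_{n(t)}}\nu f$ directly via uniform continuity of $f$. Your route is cleaner in that it avoids the boundary-continuity bookkeeping, but it is exactly where you predicted trouble that there is a genuine error.

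The bound $|T_t\nu f-T_{s_n}\nu f|\le\|g_t\|_\infty\,\nu(B_{2R}\setminus B_{r/2})$ is wrong: in the variable of integration against $\nu$ the support of $g_t(x)=f(t^{-1}x)-f(s_n^{-1}x)$ is contained in $\{x: rt/2\le\|x\|\le 2Rt\}$, an annulus that \emph{grows} with $t$; it is only after rescaling by $t$ that the annulus is fixed. The correct bound is $\|g_t\|_\infty\, T_t\nu(B_{2R}\setminus B_{r/2})$, and finite intensity of $\nu$ then buys you nothing: it gives $\E\,\nu(K)<\infty$ for each fixed relatively compact $K$, but no uniform control of $\E\,\nu(B_{2Rt}\setminus B_{rt/2})$ over $t$, so the claimed $L^1$ convergence fails in general (convergence in distribution of $T_{s_n}\nu(A)$ does not bound its expectations). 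What you actually need is tightness of the family $\{T_t\nu(B_{2R}\setminus B_{r/2})\}_{t}$, and the only available source for this is the hypothesis itself: since $s_{n-1}/s_n\to1$, for large $t$ one has $T_t\nu(B_{2R}\setminus B_{r/2})\le T_{s_{n(t)}}\nu(B_{2R}\setminus B_{r/4})$, and the right-hand side converges in distribution (choosing radii that are continuity points of the intensity of $\mu$), hence is tight; then $\|g_t\|_\infty\cdot(\text{tight})\to0$ in probability and Slutsky finishes the step. Note that this repair is itself a miniature version of the paper's set-wise sandwich, so the two proofs are closer than they first appear. With that correction the argument is complete.
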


\begin{proof}
	Since $\mu$ has finite intensity, the family of sets 
	\begin{align*}
	\mathcal{U}=\Big\{A \times[a,b] :  \E\; \mu[\partial A \times(0,\infty)]=&\E\; \mu[\partial (B_a) \cup \partial(B_b)]=0, \\
	 &A \subseteq \Sphere, 0<a<b<\infty \Big\}
	\end{align*}
	forms a dissecting semi-ring. Hence, the first claim will follow (see
	\cite[Theoreme~1.1]{K73}) by establishing that 
	\begin{equation}
	\label{eq:SIConv}
	\left(T_t \nu(A_i \times[a_i,b_i])\right)_{i \in [k]}
	\vd \left(\mu(A_i \times[a_i,b_i])\right)_{i \in [k]}
	\quad\text{as }\; n \to \infty
	\end{equation}
	for all $k\in\N$ and $A_i \times[a_i,b_i] \in \mathcal{U}$, $i=1,\dots,k$.  
	
	To simplify the argument, assume that $k=1$; for general $k \in \N$, one can argue
	similarly.  For $t>0$, let $n(t)$ be the integer such that $s_{n(t)}<t
	\le s_{n(t)+1}$. Fix a Borel set $A \subseteq \Sphere$ and $0<a<b<\infty$ with $A \times [a,b] \in \mathcal{U}$
	and $\eps \in (0,b-a)$. Since $\lim_{n \to \infty}s_{n-1}/s_n= 1$ and $n(t)
	\to \infty$ as $t \to \infty$,
	\begin{displaymath}
	\frac{a}{s_{n(t)+1}} > \frac{a-\eps}{s_{n(t)}} \quad
	\text{and} \quad
	\frac{b}{s_{n(t)+1}}
	>\frac{b-\eps}{s_{n(t)}}
	\end{displaymath}
	for all sufficiently large $t$.  Hence, for $t$ large enough, we have
	\begin{displaymath}
	T_t \nu(A \times[a,b]) \le \nu(A \times [a/s_{n(t)+1},b/s_{n(t)}])
	\le T_{s_{n(t)}} \nu(A \times [a-\eps,b]).
	\end{displaymath}
	A similar argument yields a lower bound, so that 
	\begin{displaymath}
	T_{s_{n(t)}} \nu(A \times [a,b-\eps])
	\le T_t \nu(A \times[a,b])\le T_{s_{n(t)}} \nu(A \times [a-\eps,b])
	\end{displaymath}
	for all sufficiently large $t$.  Since $n(t) \to \infty$ as $t \to
	\infty$ and $T_{s_n} \nu \vd \mu$ as $n \to \infty$, we obtain that
	\begin{displaymath}
	\limsup_{t \to \infty} \Prob{T_t \nu(A \times [a,b]) \le x}
	\le \Prob{\mu(A \times [a,b-\eps]) \le x}
	\end{displaymath}
	and
	\begin{displaymath}
	\liminf_{t \to \infty} \Prob{T_t \nu(A \times [a,b]) \le x}
	\ge \Prob{\mu(A \times [a-\eps,b]) \le x}
	\end{displaymath}
	for $x\ge 0$.  Since $\E\; \mu[\partial (B_a) \cup \partial (B_b)]=0$, 
	\[
	\lim_{\eps \to 0}\Prob{\mu(A \times [a,b-\eps]) \le x}=\lim_{\eps \to 0}\Prob{\mu(A \times [a-\eps,b]) \le x}
	=\Prob{\mu(A \times [a,b]) \le x},
	\]
	which, together with the two inequalities above yield \eqref{eq:SIConv}, proving the first claim.
	
	Finally, let $v:S \to \R$ be a bounded continuous function with relatively compact support. For $c>0$, since $T_t \nu \vd \mu$ as $t \to \infty$,
	\begin{align*}
	\lim_{t \to \infty}T_c T_t \nu (v)&=\lim_{t \to \infty}\int_S v(x)T_{ct}\nu(dx)\\
	&=\lim_{t \to \infty}\int_S v(cx)T_t \nu(dx)=\int_S v(cx)\mu(dx)=T_c \mu(v),
	\end{align*}
	which implies that
	\[
	T_c T_t \nu \vd T_c \mu \quad \text{as }\; t \to \infty.
	\]
	On the other hand, $T_c T_t \nu=T_{ct} \nu$ converges vaguely in distribution to $\mu$ as $t \to \infty$ by our assumption. Hence we obtain $T_c \mu= \mu$, proving the scale invariance of $\mu$. 
\end{proof}

The following theorem proves Dickman convergence for the sums of
small jump sizes in a pure-jump subordinator; we note here that the
Dickman limit result is not new and has been proved in
\cite{cov09}. We prove a stronger result that the scaled point
process of jump sizes converges to a scale-invariant Poisson process
on $(0,\infty)$.

Let $Y=(Y(t))_{t \ge 0}$ be a pure-jump subordinator with infinite
L\'evy measure $\sigma$ and for $\eps>0$, let $Y_\eps$ be the process obtained by removing the jumps of size larger than $\eps$ in the L\'evy-Ito decomposition of $Y$. For $t>0$, let $\Pi_t$ denote the point process of jump sizes occurring in the time interval $[0,t]$. The scaled process $T_{1/\eps}\Pi_t$ consists of the points of $\Pi_t$ scaled by $\eps$. Recall, $D_c$ denotes a Dickman distributed random variable with parameter $c>0$.

\begin{theorem}\label{thm:covo}
	If $\eps^{-1}\int_0^\eps x \sigma(dx) \to c>0$ as $\eps \to
	0$, then for any $t > 0$,
	\begin{displaymath}
	T_{1/\eps}\Pi_t \vd \eta_{ct} \quad \text{as }\; \eps \to 0.
	\end{displaymath}
	Moreover,
	\begin{displaymath}
	\eps^{-1}Y_{\eps}(t) \vd D_{ct} \quad \text{as }\; \eps \to 0.
	\end{displaymath}
\end{theorem}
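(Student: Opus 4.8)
The plan is to exploit the fact that $\Pi_t$ is a Poisson process, so that the first assertion reduces to a deterministic statement about intensity measures plus one Tauberian-type computation, and the second reduces to a truncation-plus-uniform-integrability argument on top of the first.

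First I would record that, by the L\'evy--It\^o decomposition of the pure-jump subordinator $Y$, the jump sizes in $[0,t]$ form a Poisson process $\Pi_t$ on $(0,\infty)$ with intensity measure $t\sigma$, which is locally finite on $S=(0,\infty)$ because $\sigma((\eps,\infty))<\infty$ for every $\eps>0$. Hence $T_{1/\eps}\Pi_t$ is again Poisson, with intensity measure $\mu_\eps$ given by $\mu_\eps(A)=t\,\sigma(\eps A)$. Since $\eta_{ct}$ is simple and the finite unions of semi-open intervals $(a,b]$, $0<a<b<\infty$, form a dissecting ring contained in $\widehat\fS_{\E\eta_{ct}}$ (the boundary of such a set is Lebesgue-null, hence $\E\,\eta_{ct}$-null), it suffices by the one-dimensional criterion recalled in Section~\ref{sec:intsc} to prove $\mu_\eps((a,b])\to ct\log(b/a)=\E\,\eta_{ct}((a,b])$ as $\eps\to0$; the weak convergence of the Poisson marginals then follows, and $\sigma$-additivity over disjoint intervals settles general sets of the ring.

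The heart of the matter is therefore the claim $\sigma((\eps a,\eps b])\to c\log(b/a)$ as $\eps\to0$, for all $0<a<b<\infty$. Writing $h(\eps)=\int_0^\eps x\,\sigma(dx)$, the hypothesis reads $h(\eps)/\eps\to c$, and one has the sandwich $\eps u\,\sigma((\eps u,\eps v])\le h(\eps v)-h(\eps u)\le\eps v\,\sigma((\eps u,\eps v])$ for $u<v$. I would fix $m\in\N$, set $r=(b/a)^{1/m}$ and $a_j=ar^j$, so that $(\eps a,\eps b]=\bigsqcup_{j=0}^{m-1}(\eps a_j,\eps a_{j+1}]$; using $h(\eps a_j)/(\eps a_j)\to c$ uniformly over the finitely many indices $j$ gives $h(\eps a_{j+1})-h(\eps a_j)=c\eps(a_{j+1}-a_j)+o(\eps)$, whence $c\,\tfrac{r-1}{r}+o(1)\le\sigma((\eps a_j,\eps a_{j+1}])\le c(r-1)+o(1)$. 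Summing over $j$ and letting $\eps\to0$ yields $mc\tfrac{r-1}{r}\le\liminf_{\eps}\sigma((\eps a,\eps b])\le\limsup_{\eps}\sigma((\eps a,\eps b])\le mc(r-1)$; since $m(r-1)\to\log(b/a)$ and $r\to1$ as $m\to\infty$, the claim follows. This geometric-subdivision Tauberian step is the main obstacle: the naive one-block sandwich only delivers the crude bounds $c(b-a)/b$ and $c(b-a)/a$.

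For the second assertion, note that $\eps^{-1}Y_\eps(t)=\eps^{-1}\int_0^\eps x\,\Pi_t(dx)=\int_0^1 u\,(T_{1/\eps}\Pi_t)(du)$, and recall from the introduction that $\int_0^1 u\,\eta_{ct}(du)=_d D_{ct}$. The functional $\nu\mapsto\int_0^1 u\,\nu(du)$ is not vaguely continuous, since the test function $u\mathds{1}_{(0,1]}(u)$ lacks relatively compact support in $(0,\infty)$, so I would truncate: for $\delta\in(0,1)$ the map $\nu\mapsto\int_{(\delta,1]}u\,\nu(du)$ is continuous at $\eta_{ct}$-almost every realization (no atoms at $\delta$ or $1$), hence $\int_{(\delta,1]}u\,(T_{1/\eps}\Pi_t)(du)\vd\int_{(\delta,1]}u\,\eta_{ct}(du)$ by the first part, while $\int_{(\delta,1]}u\,\eta_{ct}(du)\uparrow\int_0^1u\,\eta_{ct}(du)=_dD_{ct}$ almost surely as $\delta\downarrow0$, the limit being a.s.\ finite with mean $ct$. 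It then remains to control the discarded piece uniformly: by Campbell's formula $\E\int_0^\delta u\,(T_{1/\eps}\Pi_t)(du)=\eps^{-1}t\,h(\eps\delta)=t\delta\cdot h(\eps\delta)/(\eps\delta)\to ct\delta$ as $\eps\to0$, so Markov's inequality gives $\limsup_{\eps\to0}\Prob{\int_0^\delta u\,(T_{1/\eps}\Pi_t)(du)>\lambda}\le ct\delta/\lambda\to0$ as $\delta\downarrow0$, for each $\lambda>0$. A standard approximation lemma for convergence in distribution of perturbed sequences then upgrades the truncated convergence to $\int_0^1u\,(T_{1/\eps}\Pi_t)(du)\vd D_{ct}$, which is the desired conclusion.
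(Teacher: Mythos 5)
Your proposal is correct, and its overall skeleton coincides with the paper's: reduce the first claim to vague convergence of the (deterministic) intensity measures $t\,T_{1/\eps}\sigma$ of the Poisson processes $T_{1/\eps}\Pi_t$, then obtain the Dickman limit by truncating at $\delta$ and controlling the discarded mass via the Campbell/Mecke identity $\E\int_0^\delta u\,(T_{1/\eps}\Pi_t)(du)=t\eps^{-1}\int_0^{\eps\delta}x\,\sigma(dx)\to ct\delta$ (the paper packages the last step as Lemma~\ref{lem:gensum}, which is exactly your ``standard approximation lemma''). The one place where you genuinely diverge is the proof of the key intensity limit $\sigma((\eps a,\eps b])\to c\log(b/a)$. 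The paper, following Covo, introduces the tilted measures $\psi_\eps(dx)=x\,T_{1/\eps}\sigma(dx)$, observes that the hypothesis says precisely $\psi_\eps((0,p])\to cp$, i.e.\ $\psi_\eps\to c\,\mathds{1}_{(0,1]}dx$ vaguely, and then integrates the function $x^{-1}\mathds{1}_{(p,1]}$ against $\psi_\eps$ using the continuity lemma (Lemma~\ref{conv.discont}) to read off $T_{1/\eps}\sigma((p,1])\to c\log(1/p)$. You instead run a hands-on Tauberian argument: geometric subdivision of $(\eps a,\eps b]$ into $m$ blocks, the two-sided sandwich $\eps u\,\sigma((\eps u,\eps v])\le h(\eps v)-h(\eps u)\le \eps v\,\sigma((\eps u,\eps v])$, and the limit $m((b/a)^{1/m}-1)\to\log(b/a)$. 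Your computation is correct and entirely elementary, at the cost of more bookkeeping; the paper's route is shorter and reuses machinery (the tilting trick plus the vague-convergence continuity lemma) that is already needed elsewhere in the argument. You also correctly flag the pitfall that the one-block sandwich is insufficient, which is the honest reason the subdivision (or, equivalently, the measure-tilting) is needed.
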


\begin{proof}
	Arguing as in the proof of \cite[Theorem~2.1]{cov09}, letting $\psi$
	and $\psi_\eps$, $\eps>0$ be the measures given by
	$\psi(dx)=\mathds{1}_{(0,1]}(x)c dx$ and $\psi_\eps(dx)= x
	\cdot T_{1/\eps} \sigma(dx)=x \sigma(\eps dx)$ respectively, for
	any $p \in (0,1)$, we have
	\begin{displaymath}
	\psi_\eps((0,p])=\int_0^p x \sigma(\eps dx)
	= \frac{1}{\eps} \int_0^{p\eps} z \sigma(dz) \to cp
	=\psi((0,p])\quad \text{as } \;\eps \to 0.
	\end{displaymath}
	By Lemma~\ref{conv.discont}, 
	\begin{displaymath}
	T_{1/\eps} \sigma((p,1])=\int_p^1 x^{-1} \psi_\eps(dx)
	\to \int_p^1 x^{-1}\psi(dx)=c\log (1/p) \quad \text{as }\; \eps \to 0,
	\end{displaymath}
	which yields that the Poisson process on $(0,\infty)$ with intensity measure $T_{1/\eps} \sigma$ converges vaguely in distribution to $\eta_c$ as
	$\eps \to 0$. Since $Y$ is a L{\'e}vy process with L{\'e}vy
	measure $\sigma$, the jump process $\Pi_t$ is distributed as a Poisson process on $(0,\infty)$ with intensity measure
	$t\sigma$; this proves the first claim.
	
	Finally, note that $\eps^{-1}Y_{\eps}(t)=\int_0^1 x\;
	(T_{1/\eps}\Pi_t)(dx)$. To prove the last claim, by
	Lemma~\ref{lem:gensum}, it suffices to check that
	\begin{equation}
	\label{eq:cov}
	\lim_{\delta \to 0}\limsup_{\eps \to 0}\E \int_0^\delta x\; (T_{1/\eps}\Pi_t)(dx)=0.
	\end{equation}
	Since $\Pi_t$ is a Poisson process with intensity measure
	$t\sigma$, we have that $T_{1/\eps}\Pi_t$ is distributed as a Poisson process on $(0,\infty)$ with intensity measure $tT_{1/\eps} \sigma$. Thus, using the Mecke equation in the first equality
	and that $\eps^{-1}\int_0^\eps x \sigma(dx) \to c$ as $\eps
	\to 0$ in the third, we obtain
	\begin{align*}
	\limsup_{\eps \to 0}\E \int_0^\delta x\;
	(T_{1/\eps}\Pi_t)(dx)
	&=\limsup_{\eps \to 0}t \int_0^\delta x T_{1/\eps} \sigma(dx)\\
	&=\limsup_{\eps \to 0}t\eps^{-1} \int_0^{\eps\delta} x \sigma(dx) =ct\delta
	\end{align*}
	which implies \eqref{eq:cov}, concluding the proof.
\end{proof}

\section{Convergence to scale-invariant Poisson processes}
\label{sec:conv-scale-invar}

Now we move our attention to proving convergence to scale-invariant
Poisson processes for sequences of general (not necessarily Poisson)
point processes. The necessary and sufficient conditions for vague
convergence in distribution of point processes to a simple point process given by
Theorem~\ref{thm1}, when applied to $\nu_n$ given by \eqref{eq:nun}
with $\eta_c$ being the limit, translate to the following simpler
condition. For convenience, denote 
\begin{displaymath}
q_k^0=\Prob{X_k=0} \quad \text{and}\quad q_k^1=\Prob{X_k=1}, \; k \ge 1.
\end{displaymath}

\begin{condition}\label{cond.1d}
	There exists $c>0$ such that for all $0<a<b<\infty$,
	\begin{enumerate}[(i)]
		\item $\prod_{k: a z_n < z_k \le b z_n} q_k^0 \to \left(a/b\right)^c$ as $n \to
		\infty$.
		\item $\liminf_{ n \to \infty }
		\sum_{k: a z_n < z_k \le b z_n}q_k^1/q_k^0\geq c\log (b/a)$.
	\end{enumerate}
\end{condition}

\begin{theorem}\label{thm2}
	A sequence of point processes $(\nu_{n})_{n\in\N}$ given by
	\eqref{eq:nun} converges vaguely in distribution to $\eta_c$ for
	some $c>0$ as $n \to \infty$ if and only if $(q_k^0,q_k^1)_{k \in
		\N}$ and $(z_n)_{n\in\N}$ satisfy Condition~\ref{cond.1d}.
\end{theorem}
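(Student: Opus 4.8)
The plan is to invoke the general point process convergence criterion (referred to as Theorem~\ref{thm1} in the text, collected in the Appendix) which states that for a simple point process limit, vague convergence in distribution $\nu_n \vd \eta_c$ on $(0,\infty)$ is equivalent to two conditions holding for every set $A$ in a dissecting ring of $\widehat\fS_{\E\eta_c}$: first, $\Prob{\nu_n(A)=0} \to \Prob{\eta_c(A)=0}$, and second, $\E\,\nu_n(A)\mathds{1}\{\nu_n(A)\le 1\} \to \E\,\eta_c(A)$, or an equivalent formulation in terms of the avoidance function and the first factorial moment measure restricted to points of multiplicity one. Since intervals $(a,b]$ with $0<a<b<\infty$ generate such a dissecting ring on $(0,\infty)$ and $\eta_c$ has a diffuse intensity (so every such interval has boundary of zero expected mass), it suffices to verify these two conditions for $A=(a,b]$. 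For $\nu_n$ given by \eqref{eq:nun}, the set of support points falling in $(a,b]$ is exactly $\{k : a z_n < z_k \le b z_n\}$, and since the $X_k$ are independent, $\nu_n((a,b]) = \sum_{k: a z_n < z_k \le b z_n} X_k$ is a sum of independent $\N_0$-valued random variables.

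First I would translate the avoidance condition. We have $\Prob{\nu_n((a,b])=0} = \prod_{k: a z_n < z_k \le b z_n} \Prob{X_k=0} = \prod_{k: a z_n < z_k \le b z_n} q_k^0$, while $\Prob{\eta_c((a,b])=0} = \exp\{-c\log(b/a)\} = (a/b)^c$, since $\eta_c$ is Poisson with intensity $\int_a^b ct^{-1}\,dt = c\log(b/a)$. So the avoidance condition is precisely Condition~\ref{cond.1d}(i). Next I would handle the factorial-moment/multiplicity-one condition. The general criterion requires that the expected number of points counted with multiplicity at most one converges, i.e.\ (in the simple-limit formulation) that $\sum_{k: a z_n < z_k \le b z_n} \Prob{X_k\ge 1} \to c\log(b/a)$, together with the avoidance condition forcing the ``clumping'' to vanish. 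The key algebraic observation is that $\Prob{X_k\ge 1} = 1-q_k^0$, and under (i) one has $\sum_k (1-q_k^0) \sim \sum_k -\log q_k^0 \to c\log(b/a)$ as long as the individual $1-q_k^0$ are uniformly small (a negligibility condition that itself follows from (i)); hence the relevant quantity is asymptotically $\sum_k -\log q_k^0$. To see where $q_k^1$ enters, write $-\log q_k^0 = -\log(1-(1-q_k^0))$ and note $1-q_k^0 \ge q_k^1$, so $\sum_k q_k^1/q_k^0 \le \sum_k q_k^1/q_k^0 \cdot$ (something), but more to the point: the condition that $\eta_c$ is \emph{simple}, i.e.\ the limit puts no mass on configurations with multiplicities, requires that the second factorial moment contribution $\sum_k \Prob{X_k\ge 2}$ vanishes; combined with $\sum_k(1-q_k^0)\to c\log(b/a)$ this gives $\sum_k q_k^1 \to c\log(b/a)$. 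Since (i) forces $q_k^0\to 1$ uniformly over the relevant index range, $\sum_k q_k^1/q_k^0$ and $\sum_k q_k^1$ have the same limiting behaviour, and one direction of (ii) (the $\liminf \ge c\log(b/a)$) is exactly what is needed; the reverse bound $\limsup \le c\log(b/a)$ will come for free from (i) via $q_k^1 \le 1-q_k^0$ and $\sum_k(1-q_k^0)\to c\log(b/a)$.

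Putting this together: for the ``if'' direction, assuming Condition~\ref{cond.1d}, (i) gives the avoidance function convergence directly; then for each interval $(a,b]$, (i) implies the $1-q_k^0$ are uniformly negligible so that $\sum_k(1-q_k^0)\to c\log(b/a)$, which in turn forces both $\sum_k q_k^1 \to c\log(b/a)$ (using (ii) for the lower bound and $q_k^1\le 1-q_k^0$ for the upper bound) and $\sum_k\Prob{X_k\ge 2} = \sum_k(1-q_k^0-q_k^1)\to 0$; these are precisely the hypotheses of Theorem~\ref{thm1} for convergence to the simple Poisson process $\eta_c$. For the ``only if'' direction, if $\nu_n\vd\eta_c$ then Theorem~\ref{thm1} yields the avoidance convergence, which is (i), and the first-moment convergence $\sum_k \Prob{X_k\ge 1}\to c\log(b/a)$ together with the simplicity-of-limit forcing $\sum_k\Prob{X_k\ge 2}\to 0$, hence $\sum_k q_k^1 \to c\log(b/a)$; combined with $q_k^0\to 1$ uniformly (from (i)) this gives $\sum_k q_k^1/q_k^0 \to c\log(b/a)$, which is stronger than (ii).

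The main obstacle I anticipate is bookkeeping the precise form of the general criterion Theorem~\ref{thm1} — in particular, getting the multiplicity/factorial-moment condition stated in exactly the form that matches $(q_k^0, q_k^1)$, and cleanly extracting the uniform negligibility of $1-q_k^0$ from Condition~\ref{cond.1d}(i) (one must rule out, e.g., a single $k$ in the window with $q_k^0$ bounded away from $1$, which is where the product structure in (i) and the monotonicity $z_n\uparrow\infty$ with the windows shrinking relative to the gaps are used). A secondary subtlety is that Condition~\ref{cond.1d}(ii) is stated only as a one-sided $\liminf$, so part of the work is to show the matching $\limsup$ bound is automatic given (i); this asymmetry is presumably deliberate (the $\liminf$ is the binding constraint once (i) is in force) and the proof should make that explicit. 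Extending from $k=1$ intervals to finite collections $A_1,\dots,A_m$ in the dissecting ring is routine by independence and disjointification, so I would relegate it to a remark.
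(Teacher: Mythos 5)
Your route can be completed, but it is considerably more involved than the paper's argument and, as written, has two genuine problems. First, you mis-state condition (ii) of Theorem~\ref{thm1}: it is not a (truncated) first-moment condition but the one-sided bound $\limsup_n \Prob{\xi_n B>1}\le\Prob{\xi B>1}$. This matters, because first moments genuinely need not converge in this setting (see Remark~\ref{rem.1.4}: only $q_k^0,q_k^1$ are constrained and $\E X_k$ can be made arbitrarily large), so any criterion phrased through expected counts is on the wrong track; your subsequent switch to $\sum_k\Prob{X_k\ge1}\to c\log(b/a)$ and $\sum_k\Prob{X_k\ge2}\to0$ (a Grigelionis-type criterion for superpositions of null arrays) is legitimate but is a different theorem from the one you cite, and it forces you to establish uniform negligibility of $1-q_k^0$ over the window $\{k:az_n<z_k\le bz_n\}$ from Condition~\ref{cond.1d}(i) alone --- the step you yourself flag as ``the main obstacle'' and do not carry out. (It is true, by shrinking the window around a hypothetical exceptional index and using (i) for all $a<b$, but it is real work.) Second, in the converse direction, ``simplicity of the limit forces $\sum_k\Prob{X_k\ge2}\to0$'' does not follow from simplicity alone; it again requires the quantitative bound $\limsup_n\Prob{\nu_n B>1}\le\Prob{\eta_c B>1}$.

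The paper's proof sidesteps all of this with one exact identity: by independence of the $X_k$,
\begin{displaymath}
\Prob{\nu_n((a,b])\le 1}=\prod_{l}q_l^0+\sum_{k}q_k^1\prod_{l\ne k}q_l^0
=\Bigl(1+\sum_{k}q_k^1/q_k^0\Bigr)\prod_{l}q_l^0,
\end{displaymath}
with sums and products over indices in the window, while $\Prob{\eta_c((a,b])\le1}=(a/b)^c(1+c\log(b/a))$. Condition (ii) of Theorem~\ref{thm1} is therefore \emph{exactly} the inequality
$\liminf_n(1+\sum_k q_k^1/q_k^0)\prod_l q_l^0\ge(a/b)^c(1+c\log(b/a))$, which under Condition~\ref{cond.1d}(i) reduces to Condition~\ref{cond.1d}(ii) --- no negligibility lemma, no factorial moments, and both directions of the equivalence become one line each. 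I recommend replacing your moment computations with this identity.
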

\begin{proof}
	Condition~\ref{cond.1d}(i) for the dissecting ring composed of
	finite unions of semi-open intervals is equivalent to condition (i)
	in Theorem~\ref{thm1}. Condition (ii) in Theorem~\ref{thm1} is
	equivalent to
	\begin{equation*}
	\liminf_{ n \to \infty }
	\left[\left(1+ \sum_{k: az_n < z_k \le bz_n}q_k^1/q_k^0\right)\prod_{l: az_n <
		z_l \le bz_n} q_l^0   \right]
	\geq \left(\frac{a}{b}\right)^c\left(1+ c\log \frac{b}{a}\right),
	\end{equation*}
	which, given Condition~\ref{cond.1d}(i), simplifies to Condition~\ref{cond.1d}(ii), proving the result.
\end{proof}

The next result concerns vague convergence to scale-invariant Poisson processes for a large class of point processes $\nu_n$ of the form \eqref{eq:nun} and, as a consequence, establishes weak convergence of sums of the points in
$(0,1)$ of $\nu_{n}$ to a generalized Dickman distributed random
variable $D_c$. Note that such a convergence does not readily follow
from the vague convergence since $\eta_c$ has infinitely many points
in any neighbourhood of zero.

\begin{theorem}\label{ex}
	For a monotone sequence of positive numbers $(z_k)_{k \ge 0}$ increasing to
	infinity with $\lim_{k \to \infty}z_k/z_{k-1}=1$, let
	$(X_k)_{k\in\N}$ be independent random variables in $\N_0$ with
	\begin{displaymath}
	q_k^0=(z_{k-1}/z_k)^c \quad  \text{and} \quad
	q_k^1=q_k^0(1-q_k^0)
	\end{displaymath}
	for some $c>0$. Then the sequence $(\nu_{n})_{n\in\N}$ defined at
	\eqref{eq:nun} converges vaguely in distribution to $\eta_c$ as $n
	\to \infty$.  If, in addition, $\E X_k=\mathcal{O}(q_k^1)$, then
	\begin{equation}
	\label{Dickman}
	\frac{1}{z_n} \sum_{k=1}^n z_k X_k
	\vd  D_c \; \text{ as $n \to \infty$}.
	\end{equation}
\end{theorem}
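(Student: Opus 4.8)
The plan is to read the first assertion off Theorem~\ref{thm2}, by checking that the prescribed $(q_k^0,q_k^1)_{k\in\N}$ and $(z_k)_{k\ge0}$ satisfy Condition~\ref{cond.1d}, and then to obtain \eqref{Dickman} from Lemma~\ref{lem:gensum} after identifying $z_n^{-1}\sum_{k=1}^n z_kX_k$ with an integral against $\nu_n$.

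First I would verify Condition~\ref{cond.1d}. Fix $0<a<b<\infty$. Since $(z_k)$ is increasing, for every sufficiently large $n$ the index set $\{k:az_n<z_k\le bz_n\}$ is a block $\{k_n,\dots,m_n\}$ with $z_{k_n-1}\le az_n<z_{k_n}$ and $z_{m_n}\le bz_n<z_{m_n+1}$; since $k_n\to\infty$ and $z_k/z_{k-1}\to1$, a squeeze yields $z_{k_n-1}/(az_n)\to1$ and $z_{m_n}/(bz_n)\to1$. Because $q_k^0=(z_{k-1}/z_k)^c$, the product in~(i) telescopes to $(z_{k_n-1}/z_{m_n})^c$, which tends to $(a/b)^c$ by the preceding limits; this is~(i). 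For~(ii), write $q_k^1/q_k^0=1-q_k^0=1-e^{-y_k}$ with $y_k:=c\log(z_k/z_{k-1})\to0$; using $1-e^{-y}\ge y(1-y/2)$ for $y\ge0$ and $\max_{k_n\le k\le m_n}y_k\to0$, one gets $\sum_{k:az_n<z_k\le bz_n}q_k^1/q_k^0\ge(1-o(1))\sum_{k=k_n}^{m_n}y_k=(1-o(1))\,c\log(z_{m_n}/z_{k_n-1})$, which tends to $c\log(b/a)$ by the same squeeze, so the $\liminf$ in~(ii) is at least $c\log(b/a)$. Theorem~\ref{thm2} then gives $\nu_n\vd\eta_c$; note the moment condition plays no role here.

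For the second assertion, note that $\int_{(0,1]}t\,\nu_n(dt)=\sum_{k:z_k\le z_n}(z_k/z_n)X_k=z_n^{-1}\sum_{k=1}^n z_kX_k$; the atom of $\nu_n$ at the point $1$ is $X_n$ with $\Prob{X_n\ne0}=1-q_n^0\to0$, and $\eta_c$ a.s.\ has no atom at $1$, so the endpoint convention is immaterial. Since $D_c=_d\int_0^1 t\,\eta_c(dt)$, Lemma~\ref{lem:gensum} reduces the claim, given the vague convergence just proved, to the truncation estimate
\begin{equation*}
\lim_{\delta\to0}\limsup_{n\to\infty}\E\int_0^\delta t\,\nu_n(dt)=0.
\end{equation*}
Here $\E\int_0^\delta t\,\nu_n(dt)=z_n^{-1}\sum_{k:z_k\le\delta z_n}z_k\,\E X_k$, and the hypothesis $\E X_k=\mathcal{O}(q_k^1)$ together with the elementary chain $q_k^1\le1-q_k^0\le c\log(z_k/z_{k-1})\le c(z_k-z_{k-1})/z_{k-1}$ and $z_k/z_{k-1}\le2$ for large $k$ gives $z_k\,\E X_k\le C'(z_k-z_{k-1})$ eventually; telescoping the relevant sum and using $z_k\le\delta z_n$ on its range then yields $\sum_{k:z_k\le\delta z_n}z_k\,\E X_k\le C'\delta z_n+\mathcal{O}(1)$, whence $\limsup_{n\to\infty}\E\int_0^\delta t\,\nu_n(dt)\le C'\delta\to0$.

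The bookkeeping in the first assertion is routine; I expect the main obstacle to be the truncation estimate, where $\E X_k=\mathcal{O}(q_k^1)$ must be converted into a bound that telescopes after summation. Both the step $q_k^1\le1-q_k^0\le c\log(z_k/z_{k-1})$ and the control of $z_k\log(z_k/z_{k-1})$ by a multiple of $z_k-z_{k-1}$ hinge on $z_k/z_{k-1}\to1$ — the same hypothesis that makes the telescoped limits in Condition~\ref{cond.1d} come out to $(a/b)^c$ and $c\log(b/a)$.
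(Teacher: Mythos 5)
Your proposal is correct and follows essentially the same route as the paper: verify Condition~\ref{cond.1d} by telescoping the product and a Riemann-sum/first-order estimate for the sum, then deduce \eqref{Dickman} from Lemma~\ref{lem:gensum} via the truncation bound $\E\int_0^\delta t\,\nu_n(dt)=\mathcal{O}(\delta)$ obtained by telescoping $z_k\E X_k\lesssim z_k-z_{k-1}$. The only differences are cosmetic (you use $1-e^{-y}\le y$ and $\log(1+u)\le u$ where the paper uses $1-(1-x)^c\le 2^{\lceil c\rceil}x$), and your remark on the endpoint atom at $t=1$ is a harmless extra precision.
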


\begin{proof}
	Fix $0<a<b<\infty$. Let $M=\inf\{k:az_n < z_k \le bz_n\}$ and
	$N=\sup\{k:az_n < z_k \le bz_n\}$. Letting $\delta_{n}=az_n - z_{M-1}$
	and $\delta'_{n}=bz_n-z_{N}$, one has
	\begin{displaymath}
	\frac{z_{M-1}}{z_{N}}=\frac{a-\delta_{n}/z_n}{b-\delta'_{n}/z_n}.
	\end{displaymath}
	Since $\lim_{k \to \infty}z_k/z_{k+1}=1$ and $M\to\infty$ as $n \to
	\infty$,
	\begin{displaymath}
	\limsup_{n \to \infty} \frac{\delta_{n}}{z_n}
	\le \lim_{n \to \infty} \frac{z_M-z_{M-1}}{z_M}\cdot \frac{z_M}{z_n} = 0,
	\end{displaymath}
	and a similar argument shows that $\limsup_{n \to \infty}
	\delta'_{n}/z_n=0$. Thus,
	\begin{displaymath}
	\prod_{k: az_n < z_k \le bz_n} q_k^0
	=\prod_{k: az_n < z_k \le bz_n}\left(\frac{z_{k-1}}{z_k}\right)^c
	=\left(\frac{z_{M-1}}{z_N}\right)^c \to \left(\frac{a}{b}\right)^c
	\quad \text{as }\; n \to\infty.
	\end{displaymath}
	Also,
	\begin{align*}
	\liminf_{ n \to\infty } \sum_{k: az_n < z_k \le bz_n}\frac{q_k^1}{q_k^0}
	&\ge \liminf _ { n \to\infty } \left(\frac{z_{n-1}}{z_n}\right)^c
	\liminf _ { n \to\infty }\sum_{k: az_n < z_k \le bz_n}\frac{z_k^c-z_{k-1}^c}{z_{k-1}^c}\\
	&\ge \liminf_{ n \to \infty }\int_{z_{M-1}^c}^{z_{N}^c}\frac{1}{t}dt
	=c \liminf_{ n \to\infty}\log \frac{z_{N}}{z_{M-1}}= c\log \frac{b}{a}.
	\end{align*}
	Hence, Condition~\ref{cond.1d} is satisfied and the first claim
	follows by Theorem~\ref{thm2}.
	
	If $\E X_k=\mathcal{O}(q_k^1)$, then there exists $C>0$ such that
	$\E X_k \le Cq_k^1$ for all $k\in\N$. Denoting by $\lceil \cdot
	\rceil$ the ceiling function and using the simple inequality that
	$1-(1-x)^c \le 2^{\lceil c \rceil}x$ for $x \in [0,1]$ in the
	penultimate step, we have
	\begin{align*}
	\E \int_0^\eps t \nu_{n}(dt)&=\frac{1}{z_n} \sum_{k:z_k \le z_n \eps} z_k \E X_k
	\le \frac{C}{z_n} \sum_{k:z_k \le z_n \eps} z_k q_k^1\\
	&\le \frac{C}{z_n} \sum_{k:z_k \le z_n \eps}z_k
	\left(1-\left(1-\frac{z_k-z_{k-1}}{z_k}\right)^c\right)\\
	&\le \frac{C}{z_n} \sum_{k:z_k \le z_n \eps} 2^{\lceil c \rceil}(z_k-z_{k-1})
	\le C2^{\lceil c \rceil}\eps.
	\end{align*}
	Therefore,
	\begin{equation}\label{eq:rem4.5}
	\lim_{\eps \to 0} \limsup_{n \to \infty} \E \int_0^\eps t \nu_{n}(dt)=0.
	\end{equation}
	Thus, invoking Lemma~\ref{lem:gensum} we obtain
	\begin{align*}
	&\int_0^1 t \nu_{n}(dt)=\frac{1}{z_n} \sum_{k=1}^n z_k X_k \vd  D_c \quad \text{as }\;n \to \infty. \qedhere
	\end{align*}
\end{proof}

\begin{remark}\label{rem.ber} 
	Recall that $X$ is a geometric random variable with parameter $p
	\in(0,1)$ if $\Prob{X=m} =(1-p)^m p$ for $m \ge 0$; we then write $X
	\sim {\rm Geom}(p)$. For $(z_k)_{k\in\N}$ as in Theorem~\ref{ex},
	clearly $X_k \sim {\rm Geom}(q_k^0)$ satisfies the conditions therein. We
	can also take the random variables $X_k \sim {\rm Ber}(q_k^0)$ with
	$q_k^0$ as in Theorem~\ref{ex}, i.e.\ $X_k$ is a $\{0,1\}$-valued
	random variable with $\Prob{X_k=0}=q_k^0$. In this case, a similar
	proof shows that
	\begin{displaymath}
	\nu_{n}=\sum_{k=1}^\infty X_k \delta_{z_k/z_n}
	\vd \eta_c \quad \text{as }\; n \to \infty.
	\end{displaymath}
	Since $\E X_k=q_k^1$, arguing like in Theorem~\ref{ex}, one can establish \eqref{Dickman} in this case as well.
\end{remark}

\begin{remark}\label{rem.1.4}
	Even though under Condition~\ref{cond.1d} the sequence $\nu_{n}$
	converges vaguely in distribution to a simple process, it is not necessarily true
	that the $X_k$'s are $\{0,1\}$-valued almost surely for all
	sufficiently large $n$. Consider the sequence $\nu_{n}$ as in
	Theorem~\ref{ex} with $c=1$ and $z_k$ defined sequentially by letting $z_0=z_1=1$
	and $z_n/z_{n-1}=\sqrt{n}/(\sqrt{n}-1)$ for $n \ge 2$. Since
	\begin{displaymath}
	z_n = \frac{\sqrt{n}}{\sqrt{n}-1}z_{n-1}\ge \frac{n}{n-1}z_{n-1}\ge \cdots
	\ge n z_1=n,
	\end{displaymath}
	Theorem~\ref{ex} yields that $\nu_{n} \vd \eta$ as $n \to
	\infty$. Furthermore, 
	\begin{displaymath}
	\sum_{k=1}^\infty \Prob{X_k\geq 2}
	=\sum_{k=1}^\infty (1-q_k^0-q_k^1)
	=\sum_{k=2}^\infty (1-z_{k-1}/z_k)^2 \ge \sum_{k=2}^\infty k^{-1},
	\end{displaymath}
	which diverges. By the Borel-Cantelli lemma, $X_k$ is strictly
	greater than 1 for infinitely many $k$. However, after rescaling,
	the number of points with multiplicities more than 1 in any bounded
	interval $[a,b]\subset(0,\infty)$ converges to zero.
	
	The processes in Theorem~\ref{ex} do not necessarily satisfy
	\eqref{cond.sum}, since only $q_k^0$ and $q_k^1$ are specified there
	and one can allocate the rest of the probability on a large number
	to make $\E X_k$ sufficiently large so that \eqref{cond.sum} does
	not hold. Hence, an additional condition like $\E
	X_k=\mathcal{O}(q_k^1)$ is essential. Note that, for $X_k \sim {\rm
		Geom}(q_k^0)$, we have $q_k^1=q_k^0(1-q_k^0)$ and
	\begin{displaymath}
	\E X_k=(1-q_k^0)/q_k^0=(1/q_k^0)^2 q_k^1=\mathcal{O}(q_k^1),
	\end{displaymath}
	since $q_k^0 \to 1$ as $k \to \infty$.
\end{remark}

Next, we describe a sequence of point processes arising in
probabilistic number theory which satisfies Condition~\ref{cond.1d}, and hence, converges to the scale-invariant Poisson
process $\eta$ by Theorem~\ref{thm2} and the sums of points in $(0,1)$ converge to the standard
Dickman distribution. For an enumeration $(p_k)_{k\in\N}$ of the
prime numbers in increasing order, let $\Omega_{n}$ denote the set of
positive integers having all its prime factors less than or equal
to the $n^{th}$ prime $p_n$. Let $M_n$ be a random variable
distributed according to the probability mass function $\Theta_n$ with
$\Theta_n(m)$ being proportional to the inverse of $m$ for $m \in
\Omega_n$.  Then one can show that (see e.g.\ \cite{Pi16})
\begin{equation}
\label{eq:1}
\frac{\log M_{n}}{\log p_n}
=_d \frac{1}{\log p_n} \sum_{k=1}^n X_k \log p_k,
\end{equation}
where $X_1,\ldots,X_n$ are independent with $X_k \sim {\rm
	Geom}(1-1/p_k)$ for $1 \le k \le n$.  The distributional convergence
of the right-hand side of \eqref{eq:1} to the standard Dickman distribution was proved in \cite{Pi16} with optimal convergence rates provided in \cite{bh17} using Stein's
method. We prove that this convergence is a consequence of the
underlying sequence of point processes converging to $\eta$.

\begin{theorem}\label{thm.Dickman}
	Let $(\nu_{n})_{n\in\N}$ be a sequence of point processes defined
	at \eqref{eq:nun} with $z_k=\log p_k$ and $X_k \sim {\rm
		Geom}(1-1/p_k)$ for $k\in\N$. Then $\nu_{n} \vd \eta$ as $n \to
	\infty$ and
	\begin{displaymath}
	\frac{1}{\log p_n} \sum_{k=1}^n X_k \log p_k \vd D
	\quad \text{as }\; n \to \infty.
	\end{displaymath}
\end{theorem}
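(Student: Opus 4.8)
The plan is to deduce Theorem~\ref{thm.Dickman} from Theorem~\ref{thm2} (for the vague convergence) and from Theorem~\ref{ex} together with Lemma~\ref{lem:gensum} (for the Dickman convergence), by checking the two hypotheses of Condition~\ref{cond.1d} and the growth condition $\E X_k = \mathcal{O}(q_k^1)$ for the specific choice $z_k = \log p_k$, $X_k \sim {\rm Geom}(1-1/p_k)$. The key analytic input is the prime number theorem in the form $p_k \sim k \log k$, which gives $\log p_k \sim \log k$ and hence $z_k/z_{k-1} = \log p_k/\log p_{k-1} \to 1$, so the sequence $(z_k)$ satisfies the hypotheses of Theorem~\ref{ex} and of Lemma~\ref{lem:1}.

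First I would record the relevant probabilities: for $X_k \sim {\rm Geom}(1-1/p_k)$ we have $q_k^0 = \Prob{X_k=0} = 1 - 1/p_k$ and $q_k^1 = \Prob{X_k=1} = (1-1/p_k)(1/p_k)$, so $q_k^1/q_k^0 = 1/p_k$. For Condition~\ref{cond.1d}(i) I would compute, for fixed $0<a<b<\infty$,
\begin{displaymath}
\prod_{k: a z_n < z_k \le b z_n} q_k^0
= \prod_{k: a z_n < \log p_k \le b z_n} \Bigl(1 - \frac{1}{p_k}\Bigr),
\end{displaymath}
take logarithms, and use $-\log(1-1/p_k) = 1/p_k + O(1/p_k^2)$ together with Mertens' theorem $\sum_{p \le x} 1/p = \log\log x + M + o(1)$. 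The range $a z_n < \log p_k \le b z_n$ corresponds to $e^{a z_n} < p_k \le e^{b z_n}$, so the sum of $1/p_k$ over this range is $\log\log e^{b z_n} - \log\log e^{a z_n} + o(1) = \log(b z_n) - \log(a z_n) + o(1) = \log(b/a) + o(1)$, while the error terms $\sum 1/p_k^2$ vanish. This yields $\prod q_k^0 \to e^{-\log(b/a)} = (a/b)^{c}$ with $c=1$. For Condition~\ref{cond.1d}(ii), the same Mertens estimate gives $\sum_{k: a z_n < z_k \le b z_n} q_k^1/q_k^0 = \sum_{e^{a z_n} < p_k \le e^{b z_n}} 1/p_k \to \log(b/a)$, so the $\liminf$ equals $\log(b/a) = c\log(b/a)$; both parts hold with $c=1$, and Theorem~\ref{thm2} gives $\nu_n \vd \eta$.

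For the Dickman convergence, rather than re-deriving \eqref{Dickman} I would note that the present $(z_k)$ and the present $q_k^0$ do \emph{not} exactly match the prescription $q_k^0 = (z_{k-1}/z_k)^c$ of Theorem~\ref{ex}, so I cannot cite that theorem verbatim; instead I would mimic its second half. Namely, $\int_0^1 t\,\nu_n(dt) = \frac{1}{\log p_n}\sum_{k=1}^n X_k \log p_k$, and by Lemma~\ref{lem:gensum} it suffices to verify the uniform-integrability-type bound
\begin{displaymath}
\lim_{\eps \to 0}\limsup_{n \to \infty}\E \int_0^\eps t\,\nu_n(dt)=0.
\end{displaymath}
Here $\E \int_0^\eps t\,\nu_n(dt) = \frac{1}{\log p_n}\sum_{k: \log p_k \le \eps \log p_n} (\log p_k)\,\E X_k$, and since $\E X_k = (1-1/p_k)^{-1}(1/p_k) \le 2/p_k$ for $k$ large, this is bounded by $\frac{C}{\log p_n}\sum_{p_k \le p_n^{\eps}} (\log p_k)/p_k$. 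By Mertens' first theorem $\sum_{p \le x}(\log p)/p = \log x + O(1)$, the inner sum is $\eps \log p_n + O(1)$, so the whole expression is $\le C\eps + o(1)$, giving the required limit. Finally I would invoke Lemma~\ref{lem:gensum} to conclude $\frac{1}{\log p_n}\sum_{k=1}^n X_k \log p_k \vd D_1 = D$.

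The main obstacle is bookkeeping with the prime number theorem and Mertens' theorems: one must be careful that the summation ranges over primes $p_k$ with $\log p_k$ in a window of width proportional to $z_n=\log p_n \to \infty$ translate correctly into multiplicative windows $e^{a z_n} < p_k \le e^{b z_n}$ for $p_k$, and that the $o(1)$ terms from Mertens are genuinely uniform enough; a minor subtlety is confirming $\log p_k / \log p_{k-1} \to 1$ and $\log p_k \to \infty$ so that the monotone-sequence hypotheses of Theorem~\ref{ex} and Lemma~\ref{lem:gensum} are met. None of this is deep, but it is the only place where number-theoretic input beyond elementary estimates enters, and the cleanest route is to cite Mertens' theorems in both the product and sum forms rather than re-proving them.
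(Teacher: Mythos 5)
Your proposal is correct and follows essentially the same route as the paper: both verify Condition~\ref{cond.1d} with $c=1$ via Mertens' theorems over the range $p_n^a < p_k \le p_n^b$, and both establish \eqref{cond.sum} using $\E X_k \le 2/p_k$ together with $\sum_{p\le x}(\log p)/p = \log x + \mathcal{O}(1)$ before invoking Lemma~\ref{lem:gensum}. Your observation that Theorem~\ref{ex} cannot be cited verbatim (since $1-1/p_k \ne (\log p_{k-1}/\log p_k)^c$) and that its second half must instead be mimicked is exactly what the paper does; the only cosmetic difference is that you derive Condition~\ref{cond.1d}(i) by taking logarithms and using the sum form of Mertens, whereas the paper cites the product form directly.
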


\begin{proof}%[Proof of Theorem \ref{thm.Dickman}]
	For the first part, by Theorem~\ref{thm2}, we only need to check
	Condition~\ref{cond.1d}. Since $q_k^0=(1-1/p_k)$, for $0<a<b<\infty$, by Merten's
	formula (see e.g.\ \cite[Prop.~1.51]{tao06}), 
	\begin{displaymath}
	\prod_{k: az_n < z_k \le bz_n} q_k^0= \prod_{k: p_n^a < p_k
		\le p_n^b} \left(1-\frac{1}{p_k}\right) \to \frac{a}{b} 
	\quad \text{as }\; n \to \infty.
	\end{displaymath}
	Hence, Condition~\ref{cond.1d}(i) is satisfied. For
	Condition~\ref{cond.1d}(ii), since $q_k^1=p_k^{-1}(1-p_k^{-1})$, 
	Merten's formula yields that
	\begin{displaymath}
	\sum_{k: az_n < z_k \le bz_n}q_k^1/q_k^0
	=\sum_{k: p_n^a < p_k
		\le p_n^b}\frac{1}{p_k}\to \log \frac{b}{a} \quad \text{as }\; n \to \infty.
	\end{displaymath}
	Theorem~\ref{thm2} now yields the first part of the result.
	
	For the second part, by Lemma~\ref{lem:gensum}, it suffices to check
	\eqref{cond.sum}.  Since 
	\begin{displaymath}
	\sum_{p_k\le n} p_k^{-1}\log p_k=\log n + \mathcal{O}(1)
	\end{displaymath}
	(see \cite[Prop.~1.51]{tao06}), it follows that for $\eps >0$,
	\begin{displaymath}
	\E \int_0^\eps t \nu_{n}(dt)=\frac{1}{\log p_n}\E \sum_{k=1}^\infty X_k
	\log p_k \mathds{1}_{\{1< p_k \le p_n^\eps\} } \le \frac{2}{\log p_n}
	[\log p_n^\eps + \mathcal{O}(1)],
	\end{displaymath}
	which converges to $\eps$ as $n \to \infty$. Thus, $(\nu_n)_{n \ge
		1}$ satisfies \eqref{cond.sum}, proving the result.
\end{proof}

\begin{remark}\label{exp1}
	Let $X_k \sim {\rm Ber}(1/(1+p_k))$, where $p_k$ is the $k^{th}$
	prime number and consider $(\nu_{n})_{n\in\N}$ defined in 
	Theorem~\ref{thm.Dickman}. One can argue as in the proof
	of Theorem~\ref{thm.Dickman} to show that $\nu_{n} \vd \eta$ as $n \to
	\infty$ and 
	\begin{displaymath}
	\frac{1}{\log p_k} \sum_{k=1}^n X_k \log p_k \vd D\quad \text{as
	}\; n\to\infty.
	\end{displaymath}
	As mentioned above, if the $X_k$'s are distributed as geometric random variables given in Theorem~\ref{thm.Dickman}, the
	induced distribution on $M_n=\prod_{k=1}^n p_k^{X_k}$ is the
	reciprocal distribution on the set $\Omega_n$ of positive integers
	with all prime factors less than or equal to $p_n$. If $X_k \sim {\rm
		Ber}(1/(1+p_k))$, the induced distribution on $M_n$ turns out to
	be the reciprocal distribution on the set of square-free positive
	integers with all its prime factors less than or equal to $p_n$.
\end{remark}

Next, we provide a few more examples that arise as special cases of the
class of point processes considered in Theorem~\ref{thm2} and in
Remark~\ref{rem.ber}.

\begin{example}\label{exp2}
	Let $X_k \sim {\rm Ber}(1/k)$, $k\geq1$, be independent and
	$\nu_n=\sum_{k=1}^\infty X_k \delta_{k/n}$. In this case, one can
	easily check that Condition~\ref{cond.1d} and~\eqref{cond.sum} are
	satisfied. Hence, $\nu_n \vd \eta$ and $n^{-1}\sum_{k=1}^n k X_k \vd
	D$ as $n \to \infty$. This is a well-known example arising in the
	context of counting sums of `records' in a random permutation. For a
	uniformly random permutation $\sigma$ of $\{1,\dots,n\}$, let $S_n$
	be the sum of records, which are positions $k$ such that
	$\sigma(k)>\max_{i \in [k-1]}\sigma(i)$. One can check that $S_n$ is
	indeed distributed as $\sum_{k=1}^n k X_k$.
\end{example}

\begin{example}\label{exp3}
	Let $\nu_{n}$ be as in \eqref{eq:nun} with $z_k=\log k$ and
	independent $X_k \sim {\rm Geom}(1-1/(k\log k))$, $k\in\N$. In
	this case, it is straightforward to check that the conclusions of
	Theorem~\ref{ex} hold. Heuristically, this is equivalent to Theorem~\ref{thm.Dickman}, since, by the prime number theorem, one has that the $k^{th}$ prime number $p_k$ is asymptotically of the order $k \log k$.
\end{example}

\begin{example}\label{exp4}
	Theorem~\ref{thm2} and Lemma~\ref{lem:gensum} apply if $X_k$'s are
	independent Poisson random variables with mean $1/p_k$ and $\nu_{n}$
	is given by \eqref{eq:nun} with $z_k=\log p_k$, $k\in\N$.
\end{example}

\section{Convergence of uplifted point processes}
\label{sec.hd}

In this section, we consider convergence of certain general point
processes to scale-invariant Poisson processes in dimension $d$.
These point processes are obtained by first taking a point process on
$(0,\infty)$ and transforming (uplifting) its points to $\R^d$ by
multiplying them with random vectors taking values in $S=\R^d \setminus\{0\}$. We
start with a point process $\xi=\sum_{k=1}^\infty X_k \delta_{Z_k}$ with finite intensity on the positive half-line. Let $V$ be a random vector in $S$ with
i.i.d.\ copies $(V_k)_{k \in \N}$ which are independent of $\xi$. Define the uplifted process $\xi^V$ as
\begin{equation}\label{eq:iidup}
\xi^V= \sum_{k=1}^\infty X_k \delta_{V_k Z_k}.
\end{equation} 
We need to impose some conditions on $\xi$ and $V$ to ensure that $\xi^V$ is locally finite on $S$. To this end, throughout this section, we assume for any uplifted process $\xi^V$ that $\xi$ and $V$ satisfy
\begin{equation}\label{eq:condwd}
\E \sum_{k=1}^\infty X_k \mathds{1}_{Z_k \|V_k\| \in [a,b]} <\infty \quad \text{for all }\; 0<a<b<\infty,
\end{equation}
where $\|\cdot\|$ denotes the Euclidean norm.
Since $\xi$ has a finite intensity, this condition is always satisfied if $V$ is bounded away from $0$ and $\infty$. In Lemma~\ref{lem.char}, we show that
any scale-invariant Poisson process in $S$ has the same distribution
as the uplifted process $\eta_c^U$ for some $c>0$ and a unit random
vector $U$ in $\R^d$. Thus, our uplifting scheme is a natural
choice to recover all scale-invariant point processes in $S$.

It is well known that, if $\xi_n \vd \xi$ as $n \to \infty$, then (see e.g.\ \cite[Theorem~4.11]{kalle17})
\begin{equation}
\label{bddf}
\E\; e^{-\xi_n f} \to \E\; e^{-\xi f} \quad \text{as }\; n \to \infty
\end{equation}
for any $f \in \widehat C_{S}$. In order to handle uplifting
transformations by a possibly unbounded random vector $V$, we need to
consider test functions $f$ with unbounded support. The
following result extends \eqref{bddf} to more general functions.

\begin{lemma}\label{lem1.1'}
	Let $(\xi_n)_{n\in\N}$ and $\xi$ be point processes on a locally
	compact separable metric space $\Omega$ with $\xi$ having a finite intensity, such that $\xi_n \vd \xi$
	as $n \to \infty$. Let $h$ be a non-negative continuous function on
	$\Omega$ such that for any $\eps>0$, there exists a relatively
	compact set $K_\eps$ with
	\begin{equation}\label{cond.sum'} 
	\limsup_{n \to \infty}\E \int_{K_\eps^c} h(x) \xi_n(dx)
	\le \eps.
	\end{equation} 
	Then
	\begin{displaymath}
	\E\; e^{-\xi_n h} \to \E\; e^{-\xi h} \quad \text{as }\; n \to \infty.
	\end{displaymath}
\end{lemma}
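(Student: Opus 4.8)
The plan is to approximate $h$ by truncated test functions $h \wedge h\mathds{1}_{K}$ that have relatively compact support, apply \eqref{bddf} to those, and control the truncation error uniformly in $n$ using \eqref{cond.sum'}. More precisely, fix $\eps > 0$ and let $K_\eps$ be the relatively compact set provided by the hypothesis. By Urysohn's lemma for locally compact spaces one can choose a function $\varphi \in \widehat C_\Omega$ with $0 \le \varphi \le 1$ and $\varphi \equiv 1$ on $K_\eps$ (shrinking to a slightly larger relatively compact neighbourhood of $K_\eps$ if needed, whose boundary can be taken to be a $\xi$-null set since $\xi$ has finite intensity). Then $h\varphi \in \widehat C_\Omega$, so \eqref{bddf} applies to give $\E e^{-\xi_n (h\varphi)} \to \E e^{-\xi (h\varphi)}$ as $n \to \infty$.

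The next step is to bound the difference $|\E e^{-\xi_n h} - \E e^{-\xi_n(h\varphi)}|$. Using the elementary inequality $|e^{-s} - e^{-u}| \le |s-u|$ for $s,u \ge 0$, together with $0 \le h - h\varphi = h(1-\varphi) \le h\mathds{1}_{K_\eps^c}$ pointwise, we get
\begin{displaymath}
\bigl|\E e^{-\xi_n h} - \E e^{-\xi_n(h\varphi)}\bigr|
\le \E\bigl|\xi_n h - \xi_n(h\varphi)\bigr|
\le \E \int_{K_\eps^c} h(x)\, \xi_n(dx),
\end{displaymath}
and by \eqref{cond.sum'} the $\limsup_{n\to\infty}$ of the right-hand side is at most $\eps$. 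The same bound applies to $\xi$ in place of $\xi_n$: indeed, since $\xi_n \vd \xi$, the functional $\xi \mapsto \int_{K_\eps^c} h\, d\xi$ is a monotone increasing limit of bounded vaguely continuous functionals (integrate $h$ against $\xi$ over $K_\eps^c \cap K$ for an increasing sequence of relatively compact $K$), so by the portmanteau-type lower semicontinuity and Fatou, $\E \int_{K_\eps^c} h\, d\xi \le \liminf_n \E \int_{K_\eps^c} h\, d\xi_n \le \eps$. Hence $|\E e^{-\xi h} - \E e^{-\xi(h\varphi)}| \le \eps$ as well.

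Combining the three estimates with the triangle inequality gives
\begin{displaymath}
\limsup_{n \to \infty} \bigl|\E e^{-\xi_n h} - \E e^{-\xi h}\bigr|
\le \eps + 0 + \eps = 2\eps,
\end{displaymath}
and since $\eps > 0$ was arbitrary the claim follows. The main obstacle is the transfer of the uniform tail bound \eqref{cond.sum'} from the approximating sequence $(\xi_n)$ to the limit $\xi$: vague convergence only controls integrals of compactly supported continuous functions, so one must argue via an exhaustion of $K_\eps^c$ by relatively compact sets and a semicontinuity/Fatou argument to conclude $\E\int_{K_\eps^c} h\, d\xi \le \eps$. Everything else is a routine truncation argument together with the contraction property of $s \mapsto e^{-s}$.
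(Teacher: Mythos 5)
Your proof is correct and follows essentially the same route as the paper's: truncate $h$ to a relatively compact set, apply the convergence of Laplace functionals for compactly supported continuous test functions to the truncated part, and use \eqref{cond.sum'} together with a Lipschitz (resp.\ Jensen) bound on the exponential to control the tail uniformly in $n$. The only substantive difference is that you make explicit the transfer of the tail bound to the limit process $\xi$ via a Fatou/semicontinuity argument---a step the paper leaves implicit in its final ``combining'' sentence---and your sketch of that step is sound provided the approximating functionals are taken to be continuous compactly supported minorants of $h\mathds{1}_{K_\eps^c}$ (after enlarging $K_\eps$ to a closed set with $\E\,\xi(\partial K_\eps)=0$), rather than integrals over sets $K_\eps^c\cap K$, which are not vaguely continuous.
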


For a proof, see the Appendix. For $f \in \widehat C_S$, define the function
$h_f:\N_0\times(0,\infty) \to \R$ as
\begin{equation}
\label{eq:4}
h_f(x,y)=-\log \E \; e^{-x f(V y)}.
\end{equation}
Note that by Jensen's inequality, one has
\begin{equation}\label{eq:Jen}
h_f(x,y) \le x \E\; f(V y).
\end{equation}
%Recall that $\sN(\Omega)$ denotes the set of locally finite point
%measures on some locally compact separable metric space $\Omega$. 
Define the map $M:\sN((0,\infty)) \to
\sN(\N_0\times(0,\infty))$ at $\xi=\sum_{k=1}^\infty a_k \delta_{z_k}$
as
\begin{equation}
\label{eq:T}
M(\xi)= \sum_{k=1}^\infty \delta_{(a_k,z_k)}.
\end{equation}
This map turns a counting measure with possibly multiple points into a
simple counting measure in the product space $\N_0 \times
(0,\infty)$. 

\begin{theorem}
	\label{thm.unbdd}
	Assume that a sequence of point processes $\xi_n=\sum_{k=1}^\infty X_k \delta_{Z_k^n}$, $n \in \N$
	converges vaguely in distribution to a simple point process $\xi$ with finite intensity in
	$\sN((0,\infty))$ as $n \to \infty$. Moreover, let $V$ be a
	random vector in $S$ with i.i.d.\ copies $(V_k)_{k \in \N}$ such that for every $f \in \widehat C_S$ and $\eps>0$, there exists a compact set $K_{f,\eps} \subseteq \N_0 \times (0,\infty)$ such that
	\begin{equation}\label{eq:tra}
	\limsup_{n \to \infty}\E \sum_{(X_k,Z_k^n) \in K_{f,\eps}^c} X_k f(V_k Z_k^n) \le \eps.
	\end{equation} 
	Then $\xi_n^{V} \vd \xi^{V}$ as $n \to \infty$.
\end{theorem}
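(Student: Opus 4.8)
The idea is to reduce the vague convergence $\xi_n^V \vd \xi^V$ to a Laplace-functional statement on the upstairs product space $\N_0 \times (0,\infty)$, apply the marking construction for the i.i.d.\ vectors $(V_k)$, and then invoke Lemma~\ref{lem1.1'} with a carefully chosen test function $h_f$. Concretely, fix $f \in \widehat C_S$. By \cite[Theorem~4.11]{kalle17}, to prove $\xi_n^V \vd \xi^V$ it suffices to show $\E\, e^{-\xi_n^V f} \to \E\, e^{-\xi^V f}$ for every such $f$. Conditioning on $\xi_n$ and using the independence of the i.i.d.\ copies $(V_k)$, one computes
\begin{displaymath}
\E\, e^{-\xi_n^V f} = \E \prod_{k=1}^\infty \E\big[e^{-X_k f(V_k Z_k^n)} \,\big|\, \xi_n\big] = \E \exp\Big(-\sum_{k=1}^\infty h_f(X_k, Z_k^n)\Big) = \E\, e^{-M(\xi_n)\, h_f},
\end{displaymath}
where $h_f$ is defined at \eqref{eq:4} and $M$ is the lifting map \eqref{eq:T}; the same identity holds for $\xi$ in place of $\xi_n$. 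So everything reduces to showing $\E\, e^{-M(\xi_n) h_f} \to \E\, e^{-M(\xi) h_f}$.

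The next step is to pass the hypothesis $\xi_n \vd \xi$ through the map $M$: since $\xi$ is simple with finite intensity on $(0,\infty)$, the lifted processes satisfy $M(\xi_n) \vd M(\xi)$ on $\N_0 \times (0,\infty)$. This is essentially standard — on each relatively compact set the lift just records multiplicities, and simplicity of the limit together with finite intensity controls the boundary terms — but I would either cite the appropriate result or give a short argument via convergence of finite-dimensional distributions on a dissecting ring of product sets $\{j\} \times (a,b]$. Note $M(\xi)$ need not have finite intensity as a measure on the product space, but this is not needed for what follows; what matters is that $M(\xi)$ is a well-defined point process and $\xi$ itself has finite intensity.

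Now apply Lemma~\ref{lem1.1'} with $\Omega = \N_0 \times (0,\infty)$, the converging sequence $M(\xi_n) \vd M(\xi)$, and the test function $h = h_f$. Lemma~\ref{lem1.1'} requires that for every $\eps>0$ there be a relatively compact $K_\eps \subseteq \N_0 \times (0,\infty)$ with $\limsup_n \E \int_{K_\eps^c} h_f(x,y)\, M(\xi_n)(dx,dy) \le \eps$. Using the Jensen bound \eqref{eq:Jen}, namely $h_f(x,y) \le x\,\E f(Vy)$, one has
\begin{displaymath}
\E \int_{K_\eps^c} h_f\, dM(\xi_n) = \E \sum_{(X_k, Z_k^n) \notin K_\eps} h_f(X_k, Z_k^n) \le \E \sum_{(X_k, Z_k^n) \notin K_\eps} X_k\, \E\big[f(V_k Z_k^n) \,\big|\, \xi_n\big] = \E \sum_{(X_k, Z_k^n) \notin K_\eps} X_k f(V_k Z_k^n),
\end{displaymath}
the last equality by conditioning and the independence of $(V_k)$ from $\xi_n$. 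Choosing $K_\eps = K_{f,\eps}$ from hypothesis \eqref{eq:tra} makes the right-hand side's $\limsup$ at most $\eps$, so the hypothesis of Lemma~\ref{lem1.1'} holds. Lemma~\ref{lem1.1'} then yields $\E\, e^{-M(\xi_n) h_f} \to \E\, e^{-M(\xi) h_f}$, which by the identity above is exactly $\E\, e^{-\xi_n^V f} \to \E\, e^{-\xi^V f}$. Since $f \in \widehat C_S$ was arbitrary, $\xi_n^V \vd \xi^V$.

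**Main obstacle.** The technical heart is verifying that $h_f$ is a legitimate test function for Lemma~\ref{lem1.1'} — it must be non-negative (clear, since $f \ge 0$ forces $\E e^{-xf(Vy)} \le 1$) and continuous on $\N_0 \times (0,\infty)$. Continuity in $y$ is where the possibly-unbounded support of $V$ (allowed since we only assume \eqref{eq:condwd}, not boundedness of $V$) could cause trouble: one needs $y \mapsto \E\, e^{-x f(Vy)}$ to be continuous, which follows from dominated convergence using $0 \le 1 - e^{-xf(Vy)} \le 1$, so in fact continuity is automatic and uniform control is not needed here — the relevant tightness is entirely absorbed into the single hypothesis \eqref{eq:tra}. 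Thus the only real content beyond bookkeeping is the passage $M(\xi_n) \vd M(\xi)$ and the (routine, via Jensen and conditioning) translation of \eqref{eq:tra} into the hypothesis of Lemma~\ref{lem1.1'}; I would spend most of the write-up making the first of these precise.
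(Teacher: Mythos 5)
Your proposal is correct and follows essentially the same route as the paper's proof: the same Laplace-functional reduction via the lifted processes $M(\xi_n)$, the same use of Lemma~\ref{lem1.1'} with $h_f$ and the Jensen bound \eqref{eq:Jen} to convert \eqref{eq:tra} into \eqref{cond.sum'}, and the convergence $M(\xi_n)\vd M(\xi)$, which the paper handles via Lemma~\ref{lem:contT} (continuity of $M$ at simple limits) together with the continuous mapping theorem.
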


\begin{proof}
	Fix $f\in \widehat C_S$. Then
	\begin{align*}
	\E\; e^{-\xi_ { n }^{V} f }&=\E \left[\prod_{k=1}^\infty \E\left[\exp\left\{-X_k
	f(V_k Z_k^n)\right\}\Big|\xi_n \right]\right]\\
	&=\E\; \exp\Big\{-\sum_{k=1}^\infty
	h_f (X_k,Z_k^n)\Big\}=\E\; e^{-\widetilde \xi_n h_f },
	\end{align*}
	where $\widetilde \xi_n=M(\xi_n)$ and $h_f$ is given by \eqref{eq:4}. Since $\xi_n \vd \xi$ as $n \to
	\infty$ with $\xi$ being simple, Lemma~\ref{lem:contT} and the
	continuous mapping theorem yield that $\widetilde \xi_n \vd
	\widetilde \xi=M(\xi)$. Clearly, $h_f$ is continuous as $f$ is
	such. Also note that by \eqref{eq:Jen} and \eqref{eq:tra}, we have that $h_f$ satisfies \eqref{cond.sum'} with respect to the processes $(\widetilde \xi_n)_{n \in \N}$. By Lemma~\ref{lem1.1'},
	\begin{displaymath}
	\E\; e^{-\xi_{ n }^V f}=\E\; e^{-\widetilde \xi_n h_f} \to \E\;
	e^{-\widetilde \xi h_f} \quad \text{as }\; n \to \infty.
	\end{displaymath} 
	Finally, noticing that
	\begin{displaymath}
	\E\; e^{-\xi^{V} f}=\E\big[\E(e^{-\xi^{V} f}| \xi)\big] = \E\; e^{-\widetilde \xi h_f},
	\end{displaymath}
	we obtain
	\begin{displaymath}
	\E\; e^{-\xi_ { n }^V f } \to \E\; e^{-\xi^{V} f}
	\quad \text{as }\; n\to\infty
	\end{displaymath} 
	for all $f \in \widehat C_S$, which proves that $\xi_n^V \vd
	\xi^{V}$ as $n \to \infty$.
\end{proof}

The condition \eqref{eq:tra} in Theorem~\ref{thm.unbdd} that $(V_n)_{n \in \N}$ and $(\xi_n)_{n \in \N}$ are required to satisfy can be hard to check in general. In
some special cases, one can find some easily verifiable conditions on
$(\xi_n)_{n\in\N}$ and $V$ so that \eqref{eq:tra} is
satisfied. Throughout, $\|\cdot\|_\infty$ denotes the supremum
norm on $\widehat C_S$.

\begin{lemma}\label{lem2.8}
	Let $(\xi_n)_{n\in\N}$ be a sequence of simple point processes in
	$(0,\infty)$. Let $V$ be such that for some $\alpha>0$,
	\begin{equation}\label{cond.u}
	\limsup_{t \to \infty}t\Prob{\|V\|\ge t}<\infty \quad \text{and}\quad 
	\limsup_{t \to \infty}t^\alpha \Prob{\|V\|\le 1/t}<\infty.
	\end{equation} 
	Moreover, assume that 
	\begin{equation}\label{cond.nu}
	\lim_{r \to 0} \limsup_{n \to \infty} \E \int_0^r t \xi_n(dt)=0 \quad
	\text{and}\quad \lim_{r \to \infty} \limsup_{n \to \infty}
	\E \int_r^\infty t^{-\alpha} \xi_n(dt)=0.
	\end{equation}
	Then the processes $(\xi_n)_{n \in \N}$ and i.i.d.\ copies $(V_n)_{n \in \N}$ of $V$ satisfy \eqref{eq:tra}.
\end{lemma}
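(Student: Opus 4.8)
The goal is to verify the abstract condition \eqref{eq:tra} of Theorem~\ref{thm.unbdd} for a general $f \in \widehat C_S$, using only the tail conditions \eqref{cond.u} on $V$ and the truncated-moment conditions \eqref{cond.nu} on the $\xi_n$. The plan is to choose the compact set $K_{f,\eps} \subseteq \N_0 \times (0,\infty)$ of the form $\N_0 \times [r, R]$ for suitable $0 < r < R < \infty$ (noting that the first coordinate plays no role since $f$ only sees $V_k Z_k^n$ and the multiplicity $X_k$ appears as a prefactor), so that $K_{f,\eps}^c$ corresponds to $Z_k^n < r$ or $Z_k^n > R$. Then
\[
\E \sum_{(X_k, Z_k^n) \in K_{f,\eps}^c} X_k f(V_k Z_k^n)
= \E \int_0^r \E[f(V y)]\, \xi_n(dy) + \E \int_R^\infty \E[f(V y)]\, \xi_n(dy),
\]
using independence of $V$ from $\xi_n$ and Fubini (legitimate since $f \ge 0$), where here $\E[f(Vy)] = \int_S f(vy)\, \P_V(dv)$ is a deterministic function of $y$. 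So everything reduces to bounding the deterministic function $g(y) := \E[f(Vy)]$ near $y = 0$ and near $y = \infty$ in terms of $y$ and $y^{-\alpha}$ respectively.

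The key estimate is the following: since $f \in \widehat C_S$ has relatively compact support in $S = \R^d \setminus \{0\}$, there exist $0 < \rho < P < \infty$ with $f$ supported in the annulus $\{\rho \le \|x\| \le P\}$, and $\|f\|_\infty < \infty$. Then $f(vy) \ne 0$ forces $\rho/y \le \|v\| \le P/y$, so $g(y) = \E[f(Vy)] \le \|f\|_\infty\, \Prob{\rho/y \le \|V\| \le P/y}$. For small $y$ (so that $\rho/y$ is large), this is at most $\|f\|_\infty\, \Prob{\|V\| \ge \rho/y}$, which by the first half of \eqref{cond.u} is $\le C \|f\|_\infty\, \rho^{-1} y$ for all small $y$; hence $g(y) \le C' y$ near $0$. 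For large $y$ (so that $P/y$ is small), $g(y) \le \|f\|_\infty\, \Prob{\|V\| \le P/y}$, which by the second half of \eqref{cond.u} is $\le C \|f\|_\infty\, P^{-\alpha} y^{-\alpha}$ for all large $y$; hence $g(y) \le C'' y^{-\alpha}$ near $\infty$. I would state these as: there is a constant $C_f$ and a radius $y_0$ with $g(y) \le C_f y$ for $y \le y_0$ and $g(y) \le C_f y^{-\alpha}$ for $y \ge y_0$.

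Plugging these back in, choose $R \ge y_0$ and $r \le y_0$; then
\[
\E \sum_{(X_k,Z_k^n) \in K_{f,\eps}^c} X_k f(V_k Z_k^n)
\le C_f\, \E \int_0^r y\, \xi_n(dy) + C_f\, \E \int_R^\infty y^{-\alpha}\, \xi_n(dy),
\]
and applying $\limsup_{n\to\infty}$ followed by the two limits in \eqref{cond.nu} (sending $r \to 0$ and $R \to \infty$) makes the right-hand side arbitrarily small; so for $r$ small enough and $R$ large enough the bound is $\le \eps$, which is exactly \eqref{eq:tra}. The main thing to be careful about — the only real obstacle — is the interchange of $\E$ over $\xi_n$ with $\E$ over $V$ and the finiteness of $g(y)$ for each fixed $y$: this is where condition \eqref{eq:condwd}, imposed globally in this section, is used implicitly to guarantee the relevant integrals are finite, and nonnegativity of $f$ lets Tonelli do the rest. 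One should also note that $\E[f(Vy)]$ could a priori be infinite for $y$ in a null set, but the annulus-support argument shows $g(y) \le \|f\|_\infty$ always, so no such issue arises. I would present the estimate on $g$ as the heart of the proof and the rest as bookkeeping.
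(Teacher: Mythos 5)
Your proposal is correct and takes essentially the same route as the paper's proof: your $g(y)=\E f(Vy)$ is exactly the paper's $h(y)$, and the bounds $g(y)=\mathcal{O}(y)$ as $y\searrow 0$ and $g(y)=\mathcal{O}(y^{-\alpha})$ as $y\to\infty$ are derived by the identical annulus-support argument from \eqref{cond.u}, after which \eqref{cond.nu} concludes. The only difference is presentational: you spell out the choice $K_{f,\eps}=\N_0\times[r,R]$ and the Fubini step explicitly, which the paper compresses into the remark that it suffices to verify the two limits for $h$.
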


\begin{proof}
	Since $\xi_n$ is simple, for $f \in \widehat C_{S}$, it suffices to check that $h(y)=\E f(Vy)$ satisfies
	\begin{equation}
	\label{cond.sum''} 
	\begin{cases}
	\lim_{r \to 0} \limsup_{n \to \infty} \E \int_0^r h(y) \xi_n(dy)
	=0,\\
	\lim_{r \to \infty} \limsup_{n \to \infty}
	\E \int_r^\infty h(y) \xi_n(dy)=0.
	\end{cases}
	\end{equation} 
	Since $f$ is compactly supported, there exist $0<a<b<\infty$ such that $f(z)=0$
	for $\|z\|<a$ or $\|z\|>b$. Thus, using \eqref{cond.u} in the last
	step, we have
	\begin{align*}
	\limsup_{y \searrow 0}\frac{h(y)}{y} &= \limsup_{y \searrow 0}
	\frac{\E\; f(Vy)}{y} = \limsup_{y \searrow 0}
	\frac{\E\; \big[f(Vy)\mathds{1}_{\{\|V\|\ge a/y\}}]}{y}\\
	&\le\|f\|_\infty \limsup_{y \searrow 0} y^{-1}\Prob{\|V\|
		\ge a/y}<\infty.
	\end{align*}
	Arguing similarly and using \eqref{cond.u},
	\begin{align*}
	\limsup_{y \to \infty}y^\alpha h(y)
	&= \limsup_{y\to \infty} y^\alpha \E\; f(Vy)\\
	&=\limsup_{y \to \infty} y^{\alpha}
	\E\; \big[f(Vy)\mathds{1}_{\{\|Vy\| \le b\}}\big]
	\\& \le\|f\|_\infty \limsup_{y \to \infty} y^{\alpha}
	\Prob{\|V\| \le b/y}<\infty.
	\end{align*}
	Thus, $\limsup_{y \searrow 0} h(y)/y <\infty$ and
	$h(y)=\mathcal{O}(y^{-\alpha})$ as $y \to \infty$.
	Together with \eqref{cond.nu}, this implies that $h$
	satisfies \eqref{cond.sum''}.
\end{proof}

\begin{corollary}\label{cor:etaconv}
	Let $(\xi_n)_{n\in\N}$ be a sequence of simple point processes
	converging vaguely in distribution to $\eta_c$ as $n \to
	\infty$. Assume that a random vector $V$ in $S$ and
	$(\xi_n)_{n\in\N}$ satisfy \eqref{cond.u} and \eqref{cond.nu},
	respectively, for some $\alpha>0$. Then $\xi_n^V \vd \eta_c^V$ as $n
	\to \infty$.
\end{corollary}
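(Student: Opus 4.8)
The plan is to obtain the result by chaining Lemma~\ref{lem2.8} into Theorem~\ref{thm.unbdd}; essentially no new work is required beyond checking that the hypotheses line up. First I would record that $\eta_c$ qualifies as the limit process in Theorem~\ref{thm.unbdd}: its intensity measure $ct^{-1}dt$ is non-atomic, so $\eta_c$ is a simple point process, and it assigns finite mass to every relatively compact subset of $(0,\infty)$ (i.e.\ every set bounded away from $0$ and $\infty$), so $\eta_c$ has finite intensity. By hypothesis $\xi_n \vd \eta_c$ as $n\to\infty$.

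Next I would invoke Lemma~\ref{lem2.8}. By assumption the random vector $V$ satisfies \eqref{cond.u} for some $\alpha>0$, and the simple point processes $(\xi_n)_{n\in\N}$ satisfy \eqref{cond.nu} for that same $\alpha$. Lemma~\ref{lem2.8} then asserts precisely that $(\xi_n)_{n\in\N}$ together with i.i.d.\ copies $(V_n)_{n\in\N}$ of $V$ satisfy condition \eqref{eq:tra}: for every $f\in\widehat C_S$ and every $\eps>0$ there is a compact $K_{f,\eps}\subseteq\N_0\times(0,\infty)$ with $\limsup_{n}\E\sum_{(X_k,Z_k^n)\in K_{f,\eps}^c} X_k f(V_k Z_k^n)\le\eps$ (here $X_k\equiv 1$ since the $\xi_n$ are simple).

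With this in hand, Theorem~\ref{thm.unbdd} applies verbatim, with $\xi=\eta_c$ and the uplifting vector $V$, yielding $\xi_n^V \vd \eta_c^V$ as $n\to\infty$, which is the claim. The only point that needs a passing remark is the standing assumption \eqref{eq:condwd} for the uplifted process $\eta_c^V$: by the Mecke equation and the substitution $z\mapsto 1/z$ one checks that for $0<a<b<\infty$,
\begin{displaymath}
\E\sum_{k=1}^\infty \mathds{1}_{Z_k\|V_k\|\in[a,b]}
= c\int_0^\infty \Prob{a/z\le\|V\|\le b/z}\,\frac{dz}{z}
= c\log(b/a)<\infty,
\end{displaymath}
so \eqref{eq:condwd} holds automatically here irrespective of the law of $V$. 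There is no genuine obstacle in this argument; the content has already been absorbed into Lemma~\ref{lem2.8} and Theorem~\ref{thm.unbdd}, and the corollary merely packages the two together for the canonical scale-invariant limit.
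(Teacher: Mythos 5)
Your proposal is correct and follows exactly the route the paper intends: the corollary is the immediate concatenation of Lemma~\ref{lem2.8} (which converts \eqref{cond.u} and \eqref{cond.nu} into \eqref{eq:tra}) with Theorem~\ref{thm.unbdd}, using that $\eta_c$ is simple with finite intensity. Your additional Mecke-equation check that \eqref{eq:condwd} holds automatically for $\eta_c^V$ is a correct and worthwhile touch that the paper leaves implicit.
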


\begin{remark}
	Fix $\alpha>0$. For a sequence of point processes $(\nu_n)_{n\in\N}$
	as in Theorem~\ref{ex} with $\E X_k=\mathcal{O}(q_k^1) \le C q_k^1$
	for some $C>0$, by \eqref{eq:rem4.5} in the proof of Theorem~\ref{ex}, the first condition in \eqref{cond.nu} is satisfied. Letting
	$N=\inf\{k:z_k>z_nr\}$ for $r>0$ yields that
	\begin{align*}
	\E \int_r^\infty t^{-\alpha} \nu_n(dt)
	=z_n^{\alpha} \sum_{k:z_k > z_n r} & z_k^{-\alpha}
	\E X_k \le Cz_n^\alpha (\sup_k q_k^0)
	\sum_{k=N}^\infty \frac{z_k^c-z_{k-1}^c}{z_k^{c+\alpha}} \\
	\le &Cz_n^\alpha \left[\frac{z_N^c-z_{N-1}^c}{z_N^{c+\alpha}}
	+ \int_{(z_n r)^c}^\infty \frac{1}{x^{(c+\alpha)/c}} dx \right].
	\end{align*}
	Since the right-hand side converges to $C (c/\alpha)r^{-\alpha}$ as $n
	\to \infty$,
	\begin{displaymath}
	\lim_{r \to \infty} \limsup_{n \to \infty} \E \int_r^\infty t^{-\alpha} \nu_n(dt)=0.
	\end{displaymath}
	Hence, these point processes satisfy \eqref{cond.nu}.
\end{remark}

\begin{example}
	Consider the sequence of point processes $(\nu_n)_{n\in\N}$ given
	by \eqref{eq:nun} with $z_k=\log p_k$ and $X_k \sim {\rm
		Geom}(1-1/p_k)$. Since $p_k>k \log k$ (see e.g.\ \cite{tao06}) and $\log p_k< 2\log k$ for $k \ge 6$, (see e.g.\ \cite[Lem.~1]{Du99}), we have that $N_n=\inf \{k: p_k>p_n^r\} > n^{r/2}$ for $n$ large enough. Hence,
	\begin{multline*}
	\sum_{k:p_k>p_n^r} \frac{1}{p_k (\log p_k)^\alpha}
	\le \sum_{k=N_n}^\infty \frac{1}{k \log k (\log k)^\alpha}\\
	\le \int_{N_n-1}^\infty \frac{1}{x(\log x)^{1+\alpha}} dx=\frac{(\log (N_n-1))^{-\alpha}}{\alpha} \le \frac{2^\alpha (\log n)^{-\alpha}}{\alpha r^{\alpha}}.
	\end{multline*}
	Therefore,
	\begin{align*}
	\limsup_{n \to \infty}\E \int_r^\infty t^{-\alpha} \nu_n(dt)
	&=\limsup_{n \to \infty} (\log p_n)^\alpha\sum_{k:p_k>p_n^r} \frac{2}{p_k (\log p_k)^\alpha}\\
	&\le\limsup_{n \to \infty}(2 \log n)^\alpha \frac{2^{1+\alpha}(\log n)^{-\alpha}}{\alpha r^{\alpha}} =\frac{2^{1+2\alpha}}{\alpha r^{\alpha}},
	\end{align*}
	which yields
	\[
	\lim_{r \to \infty}\limsup_{n \to \infty}\E \int_r^\infty t^{-\alpha} \nu_n(dt)=0.
	\]
	The other condition in \eqref{cond.nu} is easy to check using Merten's formula. Hence, as $\nu_n \vd \eta$ as $n \to \infty$ by Theorem~\ref{thm.Dickman}, for $V$ satisfying
	\eqref{cond.u}, Corollary~\ref{cor:etaconv} yields that $\nu_n^V \vd
	\eta^V$ as $n \to \infty$.
\end{example}

We now return to our basic example of point processes given by
\eqref{eq:nun}. For a point process on $(0,\infty)$ with support points in a deterministic set, we can generalize the notion of uplifting. For $(\nu_{n})_{n\in\N}$ given by \eqref{eq:nun}, consider its uplifting by independent vectors $\bv=(V_k)_{k \in \N}$ in $S$ which are possibly non-identically distributed, allowing for possible dependence within the pairs $(V_k,X_k)$ for any $k \in \N$. Assume that the conditional distribution of $V_k$ given $X_k$ is a function $V(X_k)$ that does not depend on $k$, i.e.,
\begin{equation}\label{eq:vcond}
V(x)=_d (V_k|X_k=x), \quad k\in\N.
\end{equation}
For instance, this is the case if the random vectors $(V_k)_{k \in \N}$ are i.i.d.\ and independent of the random variables $(X_k)_{k \in \N}$.  We also assume that the random vectors $(V_k)_{k \in \N}$ are uniformly bounded away from $0$ and $\infty$ and define the \emph{uplifted} process $\nu_n^\bv$ as
\begin{displaymath}
\nu_n^\bv= \sum_{k=1}^\infty X_k \delta_{V_k z_k/z_n}.
\end{displaymath}
Finally, we assume that the random
variables $(X_k)_{k\in\N}$ are $\{0,1\}$-valued with high probability, i.e.,
\begin{equation}
\label{cond1}
\prod_{k=1}^\infty (q_k^0+q_k^1)>0.
\end{equation}

\begin{theorem}\label{thm4}
	For
	$(\nu_{n})_{n\in\N}$ given by \eqref{eq:nun}, assume that the
	$X_k$'s satisfy \eqref{cond1} and $\nu_{n} \vd \eta_c$ for some
	$c>0$. Let $\bv=(V_k)_{k\in\N}$ be a sequence of random vectors in $S$
	satisfying \eqref{eq:vcond} with $\eps \le \|V(x)\|\leq r$ almost
	surely for all $x \in \N$ for some $0<\eps<r<\infty$. Then $\nu_{n}^{\bv} \vd \eta_c^{V(1)}$ as $n \to
	\infty$, where $\eta_c^{V(1)}$ is defined as in \eqref{eq:iidup}.
\end{theorem}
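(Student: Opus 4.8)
The plan is to reduce the uplifted process to an i.i.d.\ uplift of a $\{0,1\}$-valued point process and then to invoke Theorem~\ref{thm.unbdd}. The only role of \eqref{cond1} is that it forces $\sum_{k\in\N}\Prob{X_k\ge 2}<\infty$. Put $X_k'=\mathds{1}_{X_k=1}$ and define the $\{0,1\}$-valued processes $\widetilde\nu_n=\sum_{k}X_k'\,\delta_{z_k/z_n}$ and $\widetilde\nu_n^{\bv}=\sum_{k}X_k'\,\delta_{V_k z_k/z_n}$, the latter built from the \emph{same} vectors $(V_k)_{k\in\N}$. The basic estimate used throughout is: since $\eps\le\|V_k\|\le r$ and $z_k\uparrow\infty$, for fixed $0<a<b<\infty$ the set of indices $k$ with $\|V_k\|\,z_k/z_n\in[a,b]$ is contained in the window $W_n(a,b)=\{k:\ az_n/r\le z_k\le bz_n/\eps\}$, whose smallest element tends to infinity with $n$; hence $\sum_{k\in W_n(a,b)}\Prob{X_k\ge 2}$, being a tail of the convergent series $\sum_k\Prob{X_k\ge 2}$, tends to $0$ as $n\to\infty$.

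\emph{Step 1 (passing to the $\{0,1\}$-valued processes).} Coupling $\nu_n$ with $\widetilde\nu_n$ through the common randomness $(X_k)_{k\in\N}$, on any set $\{a\le\|x\|\le b\}$ the two measures can disagree only if some $k\in W_n(a,b)$ has $X_k\ge 2$, which occurs with probability at most $\sum_{k\in W_n(a,b)}\Prob{X_k\ge 2}\to 0$. Since every $f\in\widehat C_S$ is supported in such a set, $\bigl|\E e^{-\widetilde\nu_n f}-\E e^{-\nu_n f}\bigr|\to 0$, and combined with $\nu_n\vd\eta_c$ this gives $\widetilde\nu_n\vd\eta_c$. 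The identical coupling applied to $\nu_n^{\bv}$ and $\widetilde\nu_n^{\bv}$ (which differ only through the terms with $X_k\ge 2$, and on $\{a\le\|x\|\le b\}$ only through such terms with $k\in W_n(a,b)$) shows $\bigl|\E e^{-\nu_n^{\bv}f}-\E e^{-\widetilde\nu_n^{\bv}f}\bigr|\to 0$ for all $f\in\widehat C_S$. It therefore suffices to prove $\widetilde\nu_n^{\bv}\vd\eta_c^{V(1)}$.

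\emph{Step 2 (recognising an i.i.d.\ uplift and applying Theorem~\ref{thm.unbdd}).} Conditionally on $(X_k)_{k\in\N}$, the vectors $(V_k)_{k:\,X_k=1}$ are independent, each with law $V(1)$ by \eqref{eq:vcond}; as this conditional law does not depend on $(X_k)$, the process $\widetilde\nu_n^{\bv}=\sum_{k:\,X_k=1}\delta_{V_k z_k/z_n}$ has the same distribution as the uplift of $\widetilde\nu_n$ by a sequence of i.i.d.\ copies of $V(1)$ independent of $\widetilde\nu_n$, in the sense of \eqref{eq:iidup}. I then apply Theorem~\ref{thm.unbdd} with $\xi_n=\widetilde\nu_n$ (which is simple, converges vaguely in distribution to $\eta_c$ by Step~1, and $\eta_c$ has finite intensity) and $V=V(1)$. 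Here condition \eqref{eq:tra} is immediate: for $f\in\widehat C_S$ supported in $\{a\le\|x\|\le b\}$ one has $X_k'\in\{0,1\}$ and $f(V_k z_k/z_n)=0$ whenever $z_k/z_n\notin[a/r,b/\eps]$, so the compact set $K=\{0,1\}\times[a/r,b/\eps]$ makes the sum in \eqref{eq:tra} vanish identically. Theorem~\ref{thm.unbdd} then yields $\widetilde\nu_n^{\bv}\vd\eta_c^{V(1)}$, with $\eta_c^{V(1)}$ exactly the limit in the statement; combined with Step~1 this gives $\E e^{-\nu_n^{\bv}f}\to\E e^{-\eta_c^{V(1)}f}$ for every $f\in\widehat C_S$, i.e.\ $\nu_n^{\bv}\vd\eta_c^{V(1)}$.

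The point I expect to require the most care is the distributional identification in Step~2, namely that conditioning on the random index set $\{k:\,X_k=1\}$ genuinely yields i.i.d.\ copies of $V(1)$ that are independent of that index set, so that $\widetilde\nu_n^{\bv}$ matches the i.i.d.\ uplift in Theorem~\ref{thm.unbdd}; together with the bookkeeping in Step~1 that the windows $W_n(a,b)$ escape to infinity so that the relevant tails of $\sum_k\Prob{X_k\ge 2}$ vanish. The remaining verifications — the equivalence of \eqref{cond1} with summability of $\Prob{X_k\ge 2}$, the compact-support bookkeeping, and the passage from convergence of Laplace functionals on $\widehat C_S$ to vague convergence in distribution — are routine.
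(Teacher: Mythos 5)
Your proof is correct, but it takes a genuinely different route from the paper's. The paper conditions on the high-probability event $E_n=\{X_k=\mathds{1}_{\{X_k>0\}}\ \text{for all } k\ge m(n)\}$ with $z_{m(n)}=o(z_n)$, identifies the conditional law of the tail of the process as a Bernoulli point process with success probabilities $q_k^1/(q_k^0+q_k^1)$, re-verifies Condition~\ref{cond.1d} for that process via Theorem~\ref{thm2} to get convergence to $\eta_c$, and then computes the Laplace functional of the uplift by hand through the compactly supported function $\widetilde h(t)=-\log \E\, e^{-f(V(1)t)}$ and \eqref{bddf}. You instead truncate unconditionally: since \eqref{cond1} is equivalent to $\sum_k\Prob{X_k\ge 2}<\infty$ and the relevant index windows escape to infinity, replacing $X_k$ by $\mathds{1}_{\{X_k=1\}}$ changes the process (uplifted or not) on any fixed annulus only with probability tending to zero, which transfers $\nu_n\vd\eta_c$ to the truncated process without touching Condition~\ref{cond.1d}; you then delegate the uplift step to Theorem~\ref{thm.unbdd}, whose hypothesis \eqref{eq:tra} is vacuous here because $\|V(x)\|\in[\eps,r]$ confines the support of $y\mapsto f(Vy)$ to a fixed compact interval. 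The distributional identification in your Step~2 --- conditionally on $(X_k)_{k\in\N}$ the vectors indexed by $\{k:X_k=1\}$ are i.i.d.\ with law $V(1)$, independent of that index set --- is exactly where \eqref{eq:vcond} and the independence of the pairs $(V_k,X_k)$ across $k$ enter; it is correct and plays the role of the conditional factorization in \eqref{exp.split}. Your route buys a cleaner reuse of the general Theorem~\ref{thm.unbdd}, at the cost of the coupling bookkeeping; the paper's is self-contained but redoes the one-dimensional convergence for the conditioned process. (A cosmetic point: for the non-uplifted pair the disagreement window is $\{k:az_n\le z_k\le bz_n\}$ rather than $W_n(a,b)$ unless $\eps\le 1\le r$, but both windows escape to infinity, so nothing changes.)
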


\begin{proof}
	Let $\widetilde X_k=\mathds{1}_{\{X_k>0\}}$. Let $\left(m(n)\right)_{n
		\in \N}$ be such that $m(n) \to \infty$ and $z_{m(n)}=o(z_n)$ as $n \to \infty$. Denote
	\begin{displaymath}
	E_n=\{X_k= \widetilde X_k \text{ for all } k
	\ge m(n)\}, \;\; n \in \N.
	\end{displaymath}
	By Kolmogorov's zero-one law and \eqref{cond1}, 
	\begin{displaymath}
	\lim_{n \to \infty}\P(E_n)=1.
	\end{displaymath} 
	Fix $f \in \widehat C_S$. Then, recalling that $V(1)=_d (V_k|X_k=1)$, we have
	\begin{align}\label{exp.split}
	&\E \left[e^{-\nu_ {n}^{\bv} f } | E_n\right]
	=\E \left[\exp\Big\{-\sum_{k=1}^\infty
	X_k f(V_k z_k/z_n)\Big\}\Big|E_n\right]\nonumber\\
	&=\E \left[\exp\Big\{-\sum_{k=1}^{m(n)-1}
	X_k f(V_k z_k/z_n)\Big\}\right]
	\E \left[\prod_{k=m(n),X_k=1}^\infty \E\; e^{-
		f(V(1) z_k/z_n)}\Big | E_n\right].
	\end{align}
	Since $z_{m(n)}=o(z_n)$, the process $\sum_{k=1}^{m(n)-1} X_k
	\delta_{z_k/z_n}$ converges vaguely in distribution to the zero process in $\mathcal{M}((0,\infty))$ as $n \to \infty$.
	Combined with our assumption that $\eps \le \|V(x)\| \le r$ almost surely for all $x \in \N$, this implies that the first factor on the right-hand side of \eqref{exp.split}
	converges to 1 as $n \to \infty$. For the second factor in
	\eqref{exp.split}, we have
	\begin{displaymath}
	\E \left[\prod_{k=m(n),X_k=1}^\infty \E\; e^{-
		f(V(1) z_k/z_n)}\Big | E_n\right]
	=\E\; \exp\Big\{-\sum_{k=m(n)}^\infty Y_k \widetilde h(z_k/z_n)\Big\},
	\end{displaymath}
	where $Y_k \sim {\rm Ber}(q_k^1/(q_k^0+q_k^1))$, $k \ge m(n)$, has
	the same distribution as $X_k$ conditional on $E_n$, and
	\begin{displaymath}
	\widetilde h(t)=-\log \E\; e^{-f(V(1)t)}.
	\end{displaymath}
	Consider the point process
	$\widetilde \nu_{n}=\sum_{k=1}^\infty Y_k \delta_{z_k/z_n}$. Using
	\eqref{cond1} for the first equality, we have that for any
	$0<a<b<\infty$,
	\begin{equation*}
	\lim_{ n \to \infty }\prod_{k: az_n < z_k \le bz_n}
	\frac{q_k^0}{q_k^0+q_k^1}
	=\lim_{ n \to \infty }\prod_{k: az_n < z_k \le bz_n} q_k^0=\left(\frac{a}{b}\right)^c,
	\end{equation*}
	where in the last equality we have used our assumption that $\nu_{n}
	\vd \eta_c$ and Theorem~\ref{thm2}. Hence, $(\widetilde \nu_{n})_{n \in \N}$ satisfies Condition~\ref{cond.1d}(i). That $(\widetilde \nu_{n})_{n\in\N}$ satisfies
	Condition~\ref{cond.1d}(ii) follows trivially by noticing that
	$(\nu_{n})_{n\in\N}$ satisfies
	Condition~\ref{cond.1d}(ii). Thus, $\widetilde \nu_{n}$ converges vaguely in distribution
	to $\eta_c$ as $n \to \infty$ by Theorem~\ref{thm2}. Again, we can
	ignore the first $m(n)-1$ terms of the sum $\widetilde \nu_{n}$ as
	it converges to a zero process, whence
	\begin{displaymath}
	\sum_{k=m(n)}^\infty Y_k \delta_{z_k/z_n} \vd \eta_c
	\quad \text{as }\; n \to \infty.
	\end{displaymath}
	By our assumption that $V(1)$ is bounded away from $0$ and $\infty$ and that $f$ is compactly supported, it follows that the function $\widetilde h$ has a relatively compact support in $(0,\infty)$. Clearly,
	$\widetilde h$ is continuous and bounded. Hence by \eqref{bddf},
	\begin{displaymath}
	\E\; \exp\Big\{-\sum_{k=m(n)}^\infty
	Y_k \widetilde h(z_k/z_n)\Big\} \to \E\; e^{-\eta_c \widetilde
		h} 
	\quad\text{as }\;n \to \infty.
	\end{displaymath} 
	By \eqref{exp.split},
	\begin{displaymath}
	\E \left[e^{-\nu_ {n}^\bv f } | E_n\right]\to \E\; e^{-\eta_c
		\widetilde h} \quad \text{as }\; n \to \infty.
	\end{displaymath}
	Finally, noticing that
	\begin{displaymath}
	\E\; e^{-\eta_c \widetilde h}=\E\Big[\E(e^{-\eta_c^{V(1)} f}| \eta_c)\Big] = \E\; e^{-\eta_c^{V(1)} f},
	\end{displaymath}
	and that $\P(E_n) \to 1$ as $n\to\infty$, we have
	\begin{displaymath}
	\E\; e^{-\nu_{n}^\bv f}
	=\E \left[e^{-\nu_{n}^\bv f} |E_n\right]\P(E_n)
	+ \E \left[e^{-\nu_{n}^\bv f} |E_n^c\right]\P(E_n^c) 
	\to \E\; e^{-\eta_c^{V(1)} f}
	\end{displaymath} 
	as $n \to \infty$ for any $f \in \widehat C_S$, which yields that
	$\nu_{n}^\bv \vd \eta_c^{V(1)}$ as $n \to \infty$.
\end{proof}

\begin{example}
	For $z_k=\log p_k$ and $X_k\sim {\rm Geom}(1-1/p_k)$ or ${\rm Ber}(1/(p_k+1))$, one can easily see that
	the conditions of Theorem~\ref{thm4} are satisfied by $(\nu_{n})_{n
		\in \N}$, and hence, for $\bv$ as in Theorem~\ref{thm4}, the
	conclusion of the result holds.
\end{example}

Note, if $V_k$ is independent of $X_k$ for all $k \in
\N$, then they are necessarily i.i.d.\ by \eqref{eq:vcond}. Now we consider an example when $(X_k)_{k\in\N}$ and
$\bv$ are dependent.

\begin{example}\label{ex.mult}
	Let $d \ge 2$ and $m \in \N$ be positive integers. Let $X_k \sim
	{\rm Geom}(1-1/p_k)$ be independent and $V_k=(mX_k)^{-1}(X_k^1,
	\dots, X_k^d)\mathds{1}_{\{X_k>0\}}$ for $k\in\N$, where $(X_k^1,
	\dots, X_k^d)$ is multinomially distributed with the number of
	experiments $mX_k$ and the probabilities of outcomes
	$q_1,\dots,q_{d}$ with $\sum_{i=1}^{d}q_i =1$. Let
	\begin{displaymath}
	\nu_n=\sum_{k=1}^\infty X_k\delta_{\log p_k/\log p_n}
	\quad\text{and}\quad 
	\nu_{n}^\bv=\sum_{k=1}^\infty X_k \delta_{V_k \log p_k/\log p_n},
	\end{displaymath}
	where $(p_k)_{k\in\N}$ is an enumeration of the primes. Clearly,
	the random variables $(X_k)_{k\in\N}$ satisfy \eqref{cond1}. For
	each $k$, the random vector $V_k$ and hence $V(x)$ is almost surely bounded away from
	$0$ and $\infty$ when $X_k=x>0$. Since by Theorem~\ref{thm.Dickman} we have that $\nu_n$ converges vaguely in
	distribution to $\eta$ as $n \to \infty$, Theorem~\ref{thm4} yields
	that $\nu_n^\bv \vd \eta^{V(1)}$ as $n \to \infty$, where $m V(1)$ is
	distributed as a multinomial random variable with $m$ experiments and probabilities
	of outcomes $q_1,\dots,q_{d}$. 
\end{example}

\section{Scale-invariant Poisson processes in higher dimensions and
	multivariate Dickman distributions}
\label{sec:scale-invar-high}

In this section, we study and classify scale-invariant Poisson
processes in higher dimensions and extend the generalized Dickman
distributions in one dimension to its multivariate counterpart. For a simple
point process $\xi$ in $(0,\infty)$ and a random vector $V$ taking values in $S=\R^d \setminus\{0\}$ bounded away from $0$ and $\infty$ with i.i.d.\ copies
$(V_k)_{k\in\N}$, recall that the uplifted point
process $\xi^V$ is given by
\begin{equation*}
\xi^V=_d \sum_{k=1}^\infty \delta_{V_k Z_k},
\end{equation*}
where $(Z_k)_{k\in\N}$ is an enumeration of the points in $\xi$.

\begin{lemma}\label{lem.char}
	Any scale-invariant Poisson process in $S$ has the same
	distribution as $\eta_c^U$ for some $c>0$ and unit random vector $U$
	in $\R^d$. Moreover, for any random vector
	$V$ in $S$ with $\eta_c$ and $(V_k)_{k\in\N}$ satisfying \eqref{eq:condwd}, the uplifted point process $\eta_c^V$ has the same distribution as $\eta_c^U$  with $U= V/\|V\|$.
\end{lemma}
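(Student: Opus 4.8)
The plan is to establish the second assertion first --- that $\eta_c^V \overset{d}{=} \eta_c^U$ with $U = V/\|V\|$ --- and then derive the classification statement from it, using the characterization of the Laplace functional of a Poisson process together with scale invariance. For the second assertion, fix $f \in \widehat C_S$ and compute the Laplace functional of $\eta_c^V$ by conditioning on $\eta_c$, exactly as in the proof of Theorem~\ref{thm.unbdd}: writing $h_f(x,y) = -\log \E\, e^{-x f(Vy)}$, we have $\E\, e^{-\eta_c^V f} = \E\, e^{-\widetilde\eta_c h_f}$ where $\widetilde\eta_c = M(\eta_c)$, and since all multiplicities are $1$ this reduces to $\E\, e^{-\eta_c g_f}$ with $g_f(y) = -\log \E\, e^{-f(Vy)}$. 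Since $\eta_c$ is a Poisson process on $(0,\infty)$ with intensity $c t^{-1}\,dt$, the Laplace functional formula gives
\begin{displaymath}
\E\, e^{-\eta_c^V f} = \exp\Big\{-c\int_0^\infty \big(1 - \E\, e^{-f(Vy)}\big)\,\frac{dy}{y}\Big\}.
\end{displaymath}
Now write $V = \|V\| U$ with $U = V/\|V\|$, condition on $\|V\|$, and substitute $y' = \|V\| y$ inside the integral; the measure $y^{-1}\,dy$ is invariant under this scaling, so the inner integral equals $\int_0^\infty (1 - \E[e^{-f(U y')} \mid \|V\|])\, dy'/y'$, and after taking expectation over $\|V\|$ (using Fubini, justified by \eqref{eq:condwd} which controls $\E\int (1-e^{-f(Vy)})\,y^{-1}dy$) we obtain exactly the Laplace functional of $\eta_c^U$. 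This proves $\eta_c^V \overset{d}{=} \eta_c^U$.

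For the first assertion, let $\Psi$ be an arbitrary scale-invariant Poisson process in $S$ with intensity measure $\Lambda$. Scale invariance $T_t \Psi \overset{d}{=} \Psi$ forces $\Lambda$ to be scale-invariant on $S = \R^d \setminus \{0\}$, i.e.\ $\Lambda(t A) = \Lambda(A)$ for all $t > 0$. Decomposing $S$ in polar coordinates $x \leftrightarrow (r,u) \in (0,\infty) \times \Sphere$, scale invariance of $\Lambda$ together with local finiteness pins down the radial part: $\Lambda$ must factor as $c\, r^{-1}\,dr \otimes \rho(du)$ for some constant $c > 0$ and some probability measure $\rho$ on $\Sphere$ (the constant $c$ and the normalization of $\rho$ are determined jointly, and finiteness of $\Lambda$ on compact annuli forces $\rho$ to be finite, hence normalizable). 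Then $\Psi$ has the same intensity measure --- and, being Poisson, the same distribution --- as $\eta_c^U$ where $U \sim \rho$: indeed the image of $c\, r^{-1}dr \otimes \rho(du)$ under $(r,u) \mapsto r u$ is precisely the intensity measure of $\eta_c^U$ computed via the push-forward in the displayed Laplace functional above with $V = U$. Since a Poisson process is determined by its intensity measure, $\Psi \overset{d}{=} \eta_c^U$.

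The main obstacle is the radial-factorization step: showing rigorously that every scale-invariant locally finite measure on $\R^d \setminus \{0\}$ splits as $c\,r^{-1}dr \otimes \rho(du)$ with $\rho$ \emph{finite}. The key points are that scale invariance transfers to the disintegration along rays, that the only scale-invariant locally finite measures on $(0,\infty)$ are constant multiples of $t^{-1}dt$, and that local finiteness on the compact annulus $\{1 \le \|x\| \le 2\}$ forces the angular measure to have finite total mass so it can be renormalized to a probability measure (absorbing the constant into $c$). Once this structural fact is in hand, everything else is a direct computation with Laplace functionals, already carried out in the second half of the argument, plus the standard fact that a Poisson process is determined by its intensity.
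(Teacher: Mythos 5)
Your proposal is correct and follows essentially the same route as the paper: both parts reduce to identifying the intensity measure, with the classification of the radial part via scale invariance (the Cauchy functional equation on $(0,\infty)$) and the change of variables $z=\|v\|t$ exploiting the invariance of $t^{-1}dt$. The only cosmetic difference is that you package the second assertion through Laplace functionals, whereas the paper compares intensity measures directly after noting that $\eta_c^V$ is Poisson; your radial-factorization sketch for the first assertion is exactly the paper's argument with $\gamma_\nu(p,A)=\E\,\nu(A\times[p,1])$.
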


\begin{proof}
	Let $\nu$ be a scale-invariant Poisson process in $S$. Hence
	$\nu(tB)=_d\nu(B)$ for every Borel set $B \in \fS$ and
	$t>0$. Represent each point $x \in S$ as a pair $(u,r) \in
	\Sphere\times (0,\infty)$, where $u=x/\|x\|$ and $r=\|x\|$. For a
	measurable subset $A \subseteq \Sphere$ and $0<a<b<\infty$, by scale
	invariance one has
	\begin{equation}\label{eq:numean}
	\E \;\nu(A\times[a,b])=\E\;\nu(A\times [a/b,1]).
	\end{equation}
	For $p \in(0,1)$ and $A \subseteq \Sphere$, define $\gamma_{\nu}(p,A)=\E \;\nu(A \times[p,1])$
	and $\gamma_{\nu}(1,A)=0$. For every fixed $A\subseteq \Sphere$, notice that
	$\gamma_\nu$ satisfies
	\begin{displaymath}
	\gamma_{\nu}(p,A)+\gamma_{\nu}(q,A)=\gamma_{\nu}(pq,A),
	\quad p,q \in(0,1).
	\end{displaymath}
	By monotonicity, $\gamma_{\nu}(p,A)=-\gamma_\nu(A)\log p$ for $p \in
	(0,1]$, where $\gamma_\nu$ is a locally finite measure on $\Sphere$
	not depending on $p$. By \eqref{eq:numean}, 
	\begin{displaymath}
	\E\;\nu(A\times[a,b])=\gamma_\nu(A)\log (b/a). 
	\end{displaymath}
	For a random vector $U$ in the unit sphere $\Sphere$ with distribution $\mu$, the
	uplifted process $\eta_c^U$ is also a Poisson process. Its intensity
	measure is given by
	\begin{equation}\label{eq:etaum}
	\E\;\eta_c^U(A \times [a,b])
	=\int_{u \in A} \int_a^b ct^{-1} dt \mu(du)=c\mu(A)\log (b/a)
	\end{equation}
	for all Borel $A \subseteq \Sphere$ and $0<a<b<\infty$.  It is
	immediately seen that $\eta_c^U$ is scale-invariant. By comparing
	the two equations above., we obtain that $\nu$ has the same intensity
	measure as $\eta_c^U$ with $c=\gamma_\nu(\Sphere)$ and $U$ is
	distributed according to $\mu=\gamma_\nu/c$. Thus $\nu=_d \eta_c^U$ proving
	the first claim.
	
	Next, for a random vector $V$ distributed on $S$ according to a
	probability measure $\psi$ with $\eta_c$ and $(V_k)_{k\in\N}$ satisfying \eqref{eq:condwd}, let $U=V/\|V\|$. Clearly, $\eta_c^V$ is
	also a Poisson process. For all $A \subseteq \Sphere$ and
	$0<a<b<\infty$, using the substitution $z=\|v\|t$ in the second
	step, the intensity of $\eta_c^V$ can be expressed as
	\begin{multline*}
	\E\;\eta_c^V(A\times[a,b])
	=\int_{v/\|v\|\in A, \|v\|t \in[a,b]} ct^{-1}dt\psi(dv)\\
	=\int_{v/\|v\|\in A, z \in[a,b]} cz^{-1}dz \psi(dv)=c \log (b/a)\Prob{U \in A}
	=\E\;\eta_c^U(A\times[a,b]),
	\end{multline*}
	where in the last step we have used \eqref{eq:etaum}. Hence,
	$\eta_c^V=_d\eta_c^U$.
\end{proof}

Recall that the generalized Dickman random variable $D_c$ with
parameter $c>0$ has the same distribution as the sum of points of
$\eta_c$ in the interval $(0,1)$. One can naturally generalize this
definition to dimensions $d\ge 2$ by considering a scale-invariant
Poisson process in $S$, which by Lemma~\ref{lem.char} is of the form
$\eta_c^U$ for some $c>0$ and unit random vector $U$ in $\R^d$, and
summing its points lying inside the unit ball $B_1$. The following
definition makes this precise.

\begin{definition}
	For a unit random vector $U$ in $\R^d$ and $c>0$, the
	\emph{multivariate Dickman} random variable $D_c^U$ with parameters $(c,U)$
	is defined by
	\begin{equation}\label{eq:GD}
	D_c^U= \int_{B_1}x \eta_c^U(dx)=\sum_{x \in \eta_c^U \cap B_1} x.
	\end{equation}
\end{definition}

Note that the points of $\eta_c$ in the interval $(0,1)$ are distributed
as the collection $\{Q_1^{1/c}, (Q_1 Q_2)^{1/c}, \dots \}$, where $(Q_k)_{k \in \N}$ are independent copies of a random variable $Q$ which is uniformly distributed on
$[0,1]$. Thus, letting $(U_i)_{i \in \N}$ be i.i.d.\ copies of $U$, we can
write
\begin{displaymath}
D_c^U=_d \sum_{x \in \eta_c^U} x \mathds{1}_{\{\|x\| < 1\}}
=_d \sum_{k=1}^\infty U_k \prod_{i=1}^k Q_i^{1/c}=_d Q^{1/c}(D_c^U+U')
\end{displaymath}
for $Q$  uniformly distributed on $[0,1]$ and $U'=_d U$ independent of $D_c^U$. Thus, the random
variable $D_c^U$ is the unique fixed point of the distributional
transformation $W \mapsto W^*$ given by
\begin{displaymath}
W^*=_d Q^{1/c}(W+U)
\end{displaymath}
with $Q$, $U$ and $W$ mutually independent.

By Lemma~\ref{lem.char}, the sum of points from any scale-invariant
Poisson process lying inside the unit ball is distributed as $D_c^U$
for some $c>0$ and unit random vector $U$. In particular, for a
general random vector $V$ in $S$, by Lemma~\ref{lem.char}, it is
straightforward to see that the sum of points of $\eta_c^V$ inside the
unit ball is distributed as $D_c^U$ with $U=V/\|V\|$.

Also note that
\begin{displaymath}
D_c^U=_d \sum_{k=1}^\infty e^{-Z_k}U_k,
\end{displaymath}
where $(U_k)_{k\geq1}$ are i.i.d.\ copies of $U$ and $(Z_k)_{k\in\N}$
is an enumeration of the points of a homogeneous Poisson process on the interval
$(0,1)$ with intensity $c$. In particular, $D_c^U$ is self-decomposable, see
\cite{sat80}.

We finish this section with an example of weak convergence to a
multivariate Dickman distribution as defined in
\eqref{eq:GD}. Consider the setting of Example~\ref{ex.mult} with
$d=2$ and $m=1$. Let $p_k$, $X_k$ and $V_k$ be as in
Example~\ref{ex.mult}.  For $p \in (0,1)$, let $X_k^1 \sim {\rm
	Bin}(X_k, p)\mathds{1}_{\{X_k>0\}}$ and $X_k^2=X_k-X_k^1$. Define
\begin{equation}\label{eq:7}
W_n=\sum_{k=1}^n X_k V_k \log p_k/\log p_n
=\frac{1}{\log p_n}\sum_{k=1}^n (X_k^1,X_k^2) \log p_k,
\end{equation} 
where $V_k=X_k^{-1}(X_k^1,X_k^2) \mathds{1}_{X_k>0}$. Let $D_1^{U}$ denote a Dickman random variable defined at
\eqref{eq:GD}, where $ U=(X,1-X)$ with $X \sim {\rm Ber}(p)$.

\begin{theorem}\label{2dimDickman}
	Let $W_n$ be given by \eqref{eq:7}. Then $W_n \vd D_1^{U}$ as $n \to
	\infty$.
\end{theorem}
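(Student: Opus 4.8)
The plan is to recognize $W_n$ as the uplifting of the number-theoretic point process $\nu_n = \sum_k X_k \delta_{\log p_k/\log p_n}$ and to invoke Theorem~\ref{thm4}. First I would record that $\nu_n \vd \eta$ as $n \to \infty$; this is exactly the first part of Theorem~\ref{thm.Dickman}. Next I would verify the hypotheses of Theorem~\ref{thm4} for the uplifted process $\nu_n^{\bv} = \sum_k X_k \delta_{V_k \log p_k/\log p_n}$ with $V_k = X_k^{-1}(X_k^1,X_k^2)\mathds{1}_{\{X_k>0\}}$: condition \eqref{cond1} holds since $q_k^0 + q_k^1 = (1-1/p_k) + p_k^{-1}(1-1/p_k) = 1 - p_k^{-2}$ and $\sum_k p_k^{-2} < \infty$; and $V_k$ satisfies \eqref{eq:vcond} with $V(x) =_d (V_k \mid X_k = x)$ not depending on $k$ (the conditional law of $(X_k^1, X_k^2)/X_k$ given $X_k = x$ depends only on $x$ and $p$), while on $\{X_k = x > 0\}$ the vector $V_k$ has norm $\|V_k\| = x^{-1}\sqrt{(X_k^1)^2 + (x - X_k^1)^2}$, which lies in the compact range $[1/\sqrt{2},\,1]$ almost surely, hence is bounded away from $0$ and $\infty$ uniformly in $k$ and $x$. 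Theorem~\ref{thm4} then gives $\nu_n^{\bv} \vd \eta^{V(1)}$ as $n \to \infty$, where $V(1) =_d (V_k \mid X_k = 1)$. When $X_k = 1$ we have $X_k^1 \sim {\rm Ber}(p)$ and $X_k^2 = 1 - X_k^1$, so $V(1) = (X, 1-X)$ with $X \sim {\rm Ber}(p)$, which is the unit vector $U$ of the statement; thus $\nu_n^{\bv} \vd \eta^{U}$.

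It remains to pass from vague convergence of $\nu_n^{\bv}$ to the convergence in distribution of the sum of its points inside the unit ball $B_1$, namely $W_n = \int_{B_1} x\, \nu_n^{\bv}(dx)$ (this identity holds because $\|V_k\| \le 1$ forces $\|V_k \log p_k/\log p_n\| < 1$ exactly when $\log p_k < \log p_n$, i.e.\ $k \le n$, possibly up to the finitely many boundary indices, and on $\{X_k>0\}$ one has $X_k V_k = (X_k^1, X_k^2)$). This is the step that does not follow from vague convergence alone, since $\eta^U$ has infinitely many points accumulating at the origin. The natural tool is the multivariate analogue of Lemma~\ref{lem:gensum}: it suffices to check the uniform small-points condition
\begin{displaymath}
\lim_{\delta \to 0}\, \limsup_{n \to \infty}\, \E \int_{\|x\| \le \delta} \|x\|\, \nu_n^{\bv}(dx) = 0,
\end{displaymath}
together with the fact that the limiting integral $\int_{B_1} x\, \eta^U(dx) = D_1^U$ is well defined. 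Since $\|V_k\| \le 1$, one has $\E \int_{\|x\| \le \delta} \|x\|\, \nu_n^{\bv}(dx) \le \E \int_{\|x\| \le \delta} \|x\|\, \nu_n^{\bv}(dx)$ bounded by $\E \int_0^{\delta} t\, \nu_n(dt) = (\log p_n)^{-1} \sum_{k: \log p_k \le \delta \log p_n} (\log p_k)\, \E X_k$, and by the estimate $\sum_{p_k \le x} p_k^{-1} \log p_k = \log x + \mathcal{O}(1)$ used in the proof of Theorem~\ref{thm.Dickman} (together with $\E X_k = p_k^{-1}(1-1/p_k)^{-1} \le 2/p_k$), this is at most $2(\log p_n)^{-1}(\delta \log p_n + \mathcal{O}(1)) \to 2\delta$, which tends to $0$ as $\delta \to 0$. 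Hence the small-points condition holds, and the continuous-mapping-type argument yields $W_n \vd \int_{B_1} x\, \eta^U(dx) = D_1^U$.

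The main obstacle is the second step: the sum functional $\mu \mapsto \int_{B_1} x\, \mu(dx)$ is not vaguely continuous on $\sN(S)$ because of mass accumulating at the origin, so one cannot simply apply the continuous mapping theorem. I expect the cleanest route is to truncate at radius $\delta$, apply vague convergence plus the continuous mapping theorem to the (now genuinely continuous) truncated functional $\mu \mapsto \int_{\delta \le \|x\| < 1} x\, \mu(dx)$, and then control the truncation error uniformly in $n$ via the displayed small-points bound above, letting $\delta \to 0$; a Slutsky-type argument then closes the gap. One should also note in passing that the finitely many boundary indices $k$ with $\log p_k = \log p_n$ (there is at most one, namely $k = n$) contribute a term $\|V_n\|\mathds{1}_{\{X_n > 0\}} \le \mathds{1}_{\{X_n>0\}}$ whose probability $p_n^{-1}(1-1/p_n) \to 0$, so it is asymptotically negligible and does not affect the limit. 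If a multivariate version of Lemma~\ref{lem:gensum} is available (or proved alongside the univariate one, as the paper's phrasing about extending to $\R^d$ suggests), this entire second step is immediate from it.
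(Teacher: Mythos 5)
Your overall strategy is viable and genuinely different from the paper's. You uplift the full geometric process directly, invoking Theorem~\ref{thm4} exactly as in Example~\ref{ex.mult} (your verifications of \eqref{cond1}, \eqref{eq:vcond} and the bound $\|V(x)\|\in[1/\sqrt2,1]$ are all correct), and then pass from $\nu_n^{\bv}\vd\eta^U$ to convergence of the sums by a truncation/Slutsky argument. The paper instead conditions on the event $E_n=\{X_k=\mathds{1}_{\{X_k>0\}}\text{ for all }k\ge\log n\}$, observes that conditionally on $E_n$ the relevant points coincide in law with those of an i.i.d.-uplifted Bernoulli process $\overline\nu_n^U$ (whose uplifting vectors are genuinely \emph{unit} vectors), applies Theorem~\ref{thm4} and the sum-convergence machinery (\cite[Theorem~4.28]{Ka02} plus Lemma~\ref{conv.discont}) to that process, and finishes with $\P(E_n)\to1$ and Slutsky. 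Your second step is essentially the proof of the multivariate analogue of Lemma~\ref{lem:gensum} that the paper carries out by hand, so that part of the plan is fine, modulo writing it out.

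There is, however, one concrete error: the identity $W_n=\int_{B_1}x\,\nu_n^{\bv}(dx)$ does not hold, and your justification of it (``$\|V_k\|\le1$ forces $\|V_k\log p_k/\log p_n\|<1$ exactly when $\log p_k<\log p_n$'') is false in one direction. Since $\|V_k\|$ can be as small as $1/\sqrt2$ whenever $X_k\ge2$, a point with index $k>n$ lands in $B_1$ whenever $\|V_k\|\log p_k\le\log p_n$, i.e.\ for $k$ ranging up to roughly $\pi(p_n^{\sqrt2})$ — not just the single boundary index $k=n$. So $\int_{B_1}x\,\nu_n^{\bv}(dx)$ contains extra terms beyond $W_n$. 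The gap is reparable: every such extra term requires $X_k\ge2$ for some $k>n$, and
\begin{displaymath}
\E\sum_{k>n}X_k\mathds{1}_{\{X_k\ge2\}}\le C\sum_{k>n}p_k^{-2}\to0
\quad\text{as }n\to\infty,
\end{displaymath}
so the discrepancy vanishes in $L^1$ and Slutsky absorbs it — but this step must be stated and proved, not asserted as an identity. (This is precisely the nuisance the paper's conditioning on $E_n$ is designed to avoid: on $E_n$ all nonzero marks are $1$, hence all uplifting vectors are unit, and the restriction to $B_1$ matches the index range $k\le n$ exactly.) A second, minor point: your displayed bound $\E\int_{\|x\|\le\delta}\|x\|\,\nu_n^{\bv}(dx)\le\E\int_0^\delta t\,\nu_n(dt)$ is not immediate either, again because $\|V_k\|<1$ enlarges the index set $\{k:\|V_kz_k/z_n\|\le\delta\}$ beyond $\{k:z_k/z_n\le\delta\}$; using $\|V_k\|\ge1/\sqrt2$ one gets the correct bound $\E\int_0^{\sqrt2\,\delta}t\,\nu_n(dt)\to2\sqrt2\,\delta$, which still tends to $0$ as $\delta\to0$, so the conclusion survives.
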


\begin{proof}
	Define
	\begin{displaymath}
	\overline \nu_n= \sum_{k=\log n}^\infty Y_k \delta_{\log p_k/\log p_n},
	\end{displaymath}
	where the random variables $(Y_k)_{k\in\N}$ are independent with $Y_k\sim
	\mathrm{Ber}((1+p_k)^{-1})$ for $k\in\N$.  Notice that $\sum_{k=1}^{\log n-1} Y_k \delta_{\log p_k/\log p_n}$ converges vaguely in distribution to the zero process on $(0,\infty)$ as $n \to \infty$. By Remark~\ref{exp1}, the process
	$\sum_{k=1}^\infty Y_k \delta_{\log p_k/\log p_n}$ converges vaguely
	in distribution to $\eta$ as $n \to \infty$, hence, so does $(\overline \nu_n)_{n \in \N}$. By Theorem~\ref{thm4}, we
	obtain that $\overline \nu_n^U \vd \eta^U$ as $n \to \infty$.
	
	Let $E_n=\{X_k= \widetilde X_k \text{ for all } k \ge \log n\}$, where $\widetilde X_k=\mathds{1}_{\{X_k>0\}}$. Notice that for each $k$, the random variable $Y_k$ has the same law as $X_k$ conditional on the event $X_k=\widetilde X_k$. Hence, for each $n$, conditional on $E_n$, the point
	process $(X_k V_k \log p_k/\log p_n)_{\log n \le k \le n}$ has the
	same law as $\overline \nu_n^U$ restricted to the unit ball $B_1$.
	% \[
	%\left( (X_k V_k \log p_k/\log p_n)_{\log n \le k \le n}\Big | E_n\right)=_d \bar \nu_n^U.
	%\]
	Therefore, the conditional law of 
	\begin{displaymath}
	Z_n=\sum_{k=\log n}^n X_k V_k \log p_k/\log p_n
	\end{displaymath}
	given $E_n$ is the same as that of $\int_{B_1} x d\overline \nu_n^{U}$.
	Using \cite[Prop.~1.51]{tao06}, notice that
	\[
	\lim_{\eps \to 0} \limsup_{n \to \infty} \E \int_{B_\eps} x \overline\nu_n^U(dx) \le \lim_{\eps \to 0} \limsup_{n \to \infty} \frac{1}{\log p_n}\sum_{k: p_k \le p_n^\eps}\frac{\log p_k}{1+p_k}=0.
	\]
	Since $\overline \nu_n^{U} \vd \eta^U$ as $n \to \infty$, using \cite[Theorem~4.28]{Ka02} and Lemma~\ref{conv.discont},
	it is not hard to see that
	\[
	(Z_n | E_n) =_d \int_{B_1} x \overline \nu_n^{U}(dx) \vd \int_{B_1} x \eta^{U}(dx)=_d D_1^U\; \text{ as }\; n \to \infty.
	\] 
	Since
	$\P(E_n)\to 1$ as $n \to \infty$, this yields that $Z_n \vd D_1^U$ as $n\to\infty$.  
	
	Finally, taking expectation and using \cite[Prop.~1.51]{tao06}, it is
	straightforward to see that
	\begin{displaymath}
	\sum_{k=1}^{\log n-1} X_k V_k \log p_k/\log p_n \to 0 
	\quad \text{ as }\; n \to \infty
	\end{displaymath}
	in $L^1$, hence, in probability as $n \to \infty.$ An
	application of Slutsky's theorem yields the result.
\end{proof}

\appendix
\section{Results on vague convergence in distribution}
\label{sec:appendix}
\renewcommand{\thesection}{A}

Let $S$ be a locally compact separable metric space. The following result provides a necessary and sufficient condition for
the vague convergence in distribution of a sequence of point processes
to a simple point process. Recall that a \textit{semi-ring}
$\mathcal{I}$ is a family of sets closed under finite intersections
such that any proper difference of sets in $\mathcal{I}$ is a finite,
disjoint union of $\mathcal{I}$-sets.

\begin{theorem}[see \protect{\cite[Theorem~4.15]{kalle17}}]
	\label{thm1} 
	Let $(\xi_n)_{n\geq1}$ be point processes on $S$, and fix a
	dissecting ring $\mathcal{U} \subset \widehat \fS_{\E \xi}$ and a
	semi-ring $\mathcal{I} \subset \mathcal{U}$. Then $\xi_n \vd \xi$ in
	$\sN(S)$ as $n \to \infty$ for a simple point process $\xi$ if and
	only if
	\begin{enumerate}[(i)]
		\item $\lim_{n \to \infty}\Prob{ \xi_{n} A = 0} =
		\Prob{ \xi A = 0 }$ for all $A \in \mathcal{U}$, and
		\item $\limsup_{n \to \infty} \Prob{\xi_{n} B > 1 } \leq
		\Prob { \xi B >1} $ for all $B \in \mathcal {I}$.
	\end{enumerate}
\end{theorem}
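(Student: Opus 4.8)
The plan is to prove the two directions of this equivalence separately, following Kallenberg's scheme. \emph{Necessity.} Suppose $\xi_n\vd\xi$ vaguely. For any relatively compact Borel set $A$ with $\E\,\xi(\partial A)=0$ — in particular for every $A\in\mathcal U\subset\widehat\fS_{\E\xi}$ — one has $\xi(\partial A)=0$ almost surely, so the evaluation map $\mu\mapsto\mu A$ is continuous on a set of full $\xi$-law; sandwiching $\mathds 1_A$ between functions in $\widehat C_S$ approximating $\mathds 1_{A^\circ}$ from below and $\mathds 1_{\bar A}$ from above yields $\xi_n A\vd\xi A$ in $\N_0$. Since weak convergence of $\N_0$-valued variables is convergence of every point mass, $\Prob{\xi_n A=j}\to\Prob{\xi A=j}$ for all $j$: $j=0$ gives (i), and summing over $j\ge2$ gives $\Prob{\xi_n B>1}\to\Prob{\xi B>1}$, which is stronger than (ii). (Simplicity of $\xi$ plays no role here.)

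\emph{Sufficiency.} Assume (i) and (ii) with $\xi$ simple. First I would strengthen (ii) to: for every $B\in\mathcal I$ and $\ell\ge1$,
\[
\limsup_{n\to\infty}\Prob{\xi_n B\ge\ell}\le\Prob{\xi B\ge\ell}.
\]
To see this, dissect $B=\bigsqcup_{j=1}^m B_j$ into small $\mathcal I$-pieces (possible since $\mathcal U$ is dissecting). On $\{\xi_n B\ge\ell\}$ either some $B_j$ carries two or more points, or at least $\ell$ of the $B_j$ are nonempty. By (ii) the $\limsup$ probability of the first event is at most $\sum_j\Prob{\xi B_j\ge2}$, which tends to $0$ as the mesh shrinks because $\xi$ is simple and $\xi B<\infty$ a.s.; the count of nonempty $B_j$ is a finite sum of void indicators whose joint law converges by (i) applied via inclusion--exclusion to finite unions of the $B_j$'s, and in the fine-mesh limit its chance of exceeding $\ell-1$ is at most $\Prob{\xi B\ge\ell}$ — again because $\xi$ is simple, so distinct points of $\xi$ eventually get separated. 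Consequently, since $\Prob{\xi B\ge\ell}\to0$ as $\ell\to\infty$ and each $\xi_n B$ is a.s.\ finite, $\sup_n\Prob{\xi_n B\ge\ell}\to0$; hence $(\xi_n B)_n$ is tight for every $B\in\mathcal I$, so for every $A\in\mathcal U$ (a finite disjoint union of $\mathcal I$-sets), and therefore $(\xi_n)_n$ is tight in $\sM(S)$ for the vague topology.

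It remains to identify the subsequential limits. If $\xi_{n_k}\vd\xi'$ along a subsequence, then $\xi'\in\sN(S)$ because $\sN(S)$ is vaguely closed; approximating $\mathds 1_A$ from above and below by $\widehat C_S$-functions and squeezing with (i) and $\E\,\xi(\partial A)=0$ gives $\Prob{\xi' A=0}=\Prob{\xi A=0}$ for all $A\in\mathcal U$, while the strengthened (ii) together with the mesh argument forces $\xi'$ to be simple. A simple point process is determined by its avoidance function on a dissecting ring, so $\xi'=_d\xi$; since every subsequential limit equals $\xi$ and the family is tight, $\xi_n\vd\xi$. I expect the main obstacle to be exactly this last step — promoting convergence of the void probabilities on $\mathcal U$ to the statement that a subsequential vague limit has \emph{precisely} the avoidance function of $\xi$, which needs careful control of vague continuity sets (indicators are not vaguely continuous) and the uniqueness theorem for simple point processes — together with the refinement argument strengthening (ii), where simplicity of $\xi$ is exactly what prevents mass escaping into multiplicities.
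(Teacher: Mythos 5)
This statement is not proved in the paper at all: it is quoted verbatim from Kallenberg (\cite[Theorem~4.15]{kalle17}) and used as a black box, so there is no in-paper argument to compare yours against. Measured against the textbook proof you are reconstructing, your outline is the standard one and is correct in its main lines: necessity via the continuity-set mapping argument, and sufficiency via (a) upgrading (ii) to $\limsup_n\Prob{\xi_nB\ge\ell}\le\Prob{\xi B\ge\ell}$ by dissection, (b) tightness, and (c) identification of subsequential limits through the uniqueness theorem for simple point processes via avoidance functions. Two technical points deserve more than the passing mention you give them. First, the claim that $\sum_j\Prob{\xi B_j\ge2}\to0$ as the mesh shrinks is a separate dissection lemma in Kallenberg; the pointwise convergence $\sum_j\mathds 1_{\{\xi B_j\ge2\}}\to0$ is clear for simple $\xi$, but passing to expectations is not automatic when $\E\,\xi B=\infty$ (the natural dominating function is of order $\xi B$), so one must either truncate on the number of occupied cells at a coarser level or invoke the finite-intensity setting in which the theorem is applied here. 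Second, as you yourself flag, a subsequential limit $\xi'$ need not have the sets of $\mathcal U$ as stochastic continuity sets, so before matching avoidance functions you must pass to a dissecting system of sets whose boundaries are null for both $\E\,\xi$ and $\E\,\xi'$ (possible since, around each point, all but countably many radii work) and check that the avoidance function on that smaller class still determines a simple process. With those two repairs, which are exactly the ones Kallenberg supplies as auxiliary lemmas, your argument is the intended proof.
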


Recall that $\xi_n \vd \xi$ is equivalent to (see e.g.\ \cite[Theorerm~4.11]{kalle17})
\begin{equation}
\label{eq:2}
\int f(x)\xi_n(dx) \vd\int f(x) \xi(dx) \quad \text{as }\; n \to \infty
\end{equation}
for all $f\in\widehat{C}_S$. By a standard argument, approximating an
indicator function with a continuous function, it is straightforward to
derive the following result.

\begin{lemma}
	\label{conv.discont}
	Let $(\xi_n)_{n\geq1}, \xi$ be random measures in $S$ such that
	$\xi_n \vd \xi$ as $n \to \infty$. For a relatively compact
	measurable set $K$, let $f:S \to \R$ be a non-negative function
	which is continuous when restricted to $K$ and $f(x)=0$ for $x
	\notin K$. If $\E \xi(\partial K)=0$, then \eqref{eq:2} holds. 
\end{lemma}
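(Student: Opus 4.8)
The plan is to squeeze the discontinuous integrand $f$ between two continuous functions of the type handled by \eqref{eq:2}, and then let the approximation parameter shrink, using the hypothesis $\E\,\xi(\partial K)=0$ to control the error. First I would fix $\delta>0$ and, since $K$ is relatively compact, choose a relatively compact open set $G$ with $\overline K \subseteq G$. Using local compactness and Urysohn's lemma, I would build two functions $f_\delta^-, f_\delta^+ \in \widehat C_S$ with $0 \le f_\delta^- \le f \le f_\delta^+$, where $f_\delta^+$ is supported in $G$ and agrees with (a continuous extension of) $f$ on a slight enlargement of $K$, while $f_\delta^-$ vanishes outside $K$ and agrees with $f$ except on a $\delta$-neighbourhood of $\partial K$ inside $K$; both can be arranged so that $f_\delta^\pm \to f\mathds{1}_K$ pointwise as $\delta \to 0$, with the discrepancy $f_\delta^+ - f_\delta^-$ supported in a shrinking neighbourhood of $\partial K \cup \partial G$. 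Strictly speaking one extends $f|_K$ continuously to a neighbourhood (Tietze), multiplies by a cutoff, and checks $f_\delta^- \le f \le f_\delta^+$ pointwise on all of $S$; I would phrase this as ``standard.''

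Next, for each fixed $\delta$, applying \eqref{eq:2} to $f_\delta^\pm \in \widehat C_S$ together with the monotonicity $\xi_n f_\delta^- \le \xi_n f \le \xi_n f_\delta^+$ (here $\xi_n f := \int f\, d\xi_n$, with $\xi_n f$ meaning $\int f \mathds{1}_K d\xi_n$ on the middle term since $f$ vanishes off $K$), I would pass to the limit to obtain, for every continuity point $x$ of the distribution functions involved,
\begin{displaymath}
\P\{\xi f_\delta^- \le x\} \ge \limsup_{n\to\infty}\P\{\xi_n f \le x\}
\quad\text{and}\quad
\P\{\xi f_\delta^+ \le x\} \le \liminf_{n\to\infty}\P\{\xi_n f \le x\}.
\end{displaymath}
Then I would let $\delta \to 0$: by monotone/dominated convergence $\xi f_\delta^- \uparrow \xi(f\mathds{1}_K)$ and $\xi f_\delta^+ \downarrow \xi(f\mathds{1}_{\overline K}) = \xi(f\mathds{1}_K) + \xi(f\mathds{1}_{\partial K})$ almost surely, and the hypothesis $\E\,\xi(\partial K)=0$ forces $\xi(\partial K)=0$ a.s., so $\xi f_\delta^+ \to \xi(f\mathds{1}_K)$ a.s. as well. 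Hence both bounds converge to $\P\{\xi(f\mathds{1}_K) \le x\}$, giving $\xi_n f \vd \xi f = \int f\, d\xi$, which is exactly \eqref{eq:2} for this $f$.

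The main obstacle is the construction of the sandwiching pair $f_\delta^\pm$ in a general locally compact separable metric space: one must produce a continuous extension of $f|_K$ past $\partial K$ and simultaneously cut it off to keep compact support, all while maintaining the pointwise inequalities $f_\delta^- \le f \le f_\delta^+$ on \emph{all} of $S$ (not just on $K$), which is delicate precisely because $f$ may be wildly discontinuous across $\partial K$. Once that bookkeeping is set up, the rest is a routine squeeze argument; the only other point requiring a word is that the ``for every continuity point $x$'' caveat is harmless since a distribution function has at most countably many discontinuities and convergence in distribution is determined by convergence at continuity points. I would keep the write-up short, asserting the Urysohn/Tietze construction as standard and focusing on the limit passage.
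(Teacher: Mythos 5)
Your sandwich argument is exactly the ``standard argument, approximating an indicator function with a continuous function'' that the paper invokes without writing out, and the limit-passage part (the two one-sided bounds at continuity points, followed by $\delta\to 0$ using $\E\,\xi(\partial K)=0$ to kill the discrepancy on $\partial K$) is correct. The one technical wrinkle is your construction of $f_\delta^\pm$: Tietze extension requires a closed domain, and $f|_K$ need not extend continuously to $\overline K$ when $K$ is not closed (e.g.\ an oscillating bounded $f$ on an open $K$), so the sandwich is more safely built by sup-/inf-convolutions such as $f_\delta^+(x)=\sup_y\,[f(y)-d(x,y)/\delta]^+$ times a cutoff (this uses boundedness of $f$, which is implicit in the lemma's applications); these are continuous, compactly supported, satisfy $f_\delta^-\le f\le f_\delta^+$ on all of $S$, and converge monotonically to functions differing from $f\mathds{1}_K$ only on $\partial K$, after which your argument goes through verbatim.
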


Next we prove Lemma~\ref{lem1.1'}.

\begin{proof}[Proof of Lemma~\ref{lem1.1'}]
	Fix $\eps>0$ and $K_\eps$ satisfying \eqref{cond.sum'}. Since $\xi$ has a finite intensity, without loss of generality, we can assume that $\E\; \xi(\partial K_\eps)=0$. By
	Lemma~\ref{conv.discont},
	\begin{displaymath}
	\int_{K_\eps} h(x) \xi_n(dx) \vd
	\int_{K_\eps} h(x) \xi(dx) \quad
	\text{as }\; n\to\infty. 
	\end{displaymath}
	Hence,
	\begin{equation}
	\label{middle}
	\E\; \exp\left\{-\int_{K_\eps} h(x) \xi_n(dx)\right\}  
	\to \E\; \exp\left\{-\int_{K_\eps} h(x) \xi(dx)\right\}
	\end{equation}
	as $n\to\infty$.  Since $e^{\E X} \le \E\; e^X$,
	\begin{displaymath}
	\log\; \E\; \exp\left\{-\int_{K_\eps^c} h(x) \xi_n(dx)\right\}
	\ge-\E \int_{K_\eps^c} h(x) \xi_n(dx).
	\end{displaymath}
	Thus, by \eqref{cond.sum'}, we have that 
	\begin{equation}\label{below}
	\lim_{\eps \downarrow 0} \liminf_{n \to \infty} 
	\E\; \exp\left\{-\int_{K_\eps^c} h(x) \xi_n(dx)\right\}=e^0=1.
	\end{equation}
	The same holds for the upper limit trivially.  Combining \eqref{middle} and
	\eqref{below} yields the desired result.
\end{proof}

The following result which is a direct consequence of \cite[Theorem~4.28]{Ka02} and Lemma~\ref{conv.discont} provides conditions under which the vague
convergence of a general sequence of point processes
$(\nu_n)_{n\in\N}$ to $\eta_c$ implies the convergence of the sum of
points in $(0,1)$.

\begin{lemma}
	\label{lem:gensum} 
	Let $(\nu_n)_{n\in\N}$ be a sequence of point processes in
	$(0,\infty)$ with $\nu_n \vd \eta_c$ for $c>0$. If
	\begin{equation}
	\label{cond.sum}
	\lim_{\eps \to 0} \limsup_{n \to \infty} \E \int_0^\eps t \nu_n(dt)=0,
	\end{equation}
	then
	\begin{equation*}
	\int_0^1 t \nu_n(dt) \vd \int_0^1 t \eta_c(dt)=_d D_c.
	\end{equation*}
\end{lemma}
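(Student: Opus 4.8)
The plan is to deduce this from the cited result \cite[Theorem~4.28]{Ka02} together with Lemma~\ref{conv.discont}, using the truncation condition \eqref{cond.sum} to control the contribution of small points. First I would set up the approximation: for $\eps \in (0,1)$ let $f_\eps(t) = t\,\mathds{1}_{(\eps,1]}(t)$, which is bounded, vanishes outside the relatively compact set $K=[\eps,1]$, and is continuous on $K$ except possibly at the endpoints. Since $\eta_c$ has intensity $ct^{-1}dt$, which is non-atomic, $\E\;\eta_c(\partial K)=\E\;\eta_c(\{\eps,1\})=0$, so Lemma~\ref{conv.discont} applies and gives
\begin{displaymath}
\int_\eps^1 t\,\nu_n(dt) \vd \int_\eps^1 t\,\eta_c(dt) \quad\text{as }n\to\infty,
\end{displaymath}
for every fixed $\eps \in (0,1)$.

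Next I would pass to the limit $\eps \to 0$ on the limiting side. Since $\int_0^1 t\,\eta_c(dt) = D_c < \infty$ almost surely, monotone convergence gives $\int_\eps^1 t\,\eta_c(dt) \to \int_0^1 t\,\eta_c(dt)$ almost surely, hence in distribution, as $\eps \downarrow 0$. On the prelimit side, the truncation hypothesis \eqref{cond.sum} states precisely that
\begin{displaymath}
\lim_{\eps\to 0}\limsup_{n\to\infty} \E\int_0^\eps t\,\nu_n(dt) = 0,
\end{displaymath}
so by Markov's inequality the ``error'' $\int_0^\eps t\,\nu_n(dt)$ is uniformly negligible in probability: for any $\delta>0$, $\lim_{\eps\to0}\limsup_{n\to\infty}\Prob{\int_0^\eps t\,\nu_n(dt) > \delta} = 0$. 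This is exactly the setup of a standard ``convergence together'' lemma (e.g.\ \cite[Theorem~4.28]{Ka02}): if $X_n^{(\eps)} \vd X^{(\eps)}$ as $n\to\infty$ for each $\eps$, $X^{(\eps)} \vd X$ as $\eps\to0$, and $\lim_{\eps\to0}\limsup_{n\to\infty}\Prob{|X_n - X_n^{(\eps)}|>\delta}=0$ for all $\delta>0$, then $X_n \vd X$. Applying this with $X_n = \int_0^1 t\,\nu_n(dt)$, $X_n^{(\eps)} = \int_\eps^1 t\,\nu_n(dt)$, $X^{(\eps)}=\int_\eps^1 t\,\eta_c(dt)$ and $X = \int_0^1 t\,\eta_c(dt) =_d D_c$ yields the claim.

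The step I expect to require the most care is verifying the hypotheses of the ``convergence together'' lemma cleanly, in particular the interchange argument: one needs both that the truncated integrals converge for \emph{each} $\eps$ (handled by Lemma~\ref{conv.discont}, with attention to the fact that $f_\eps$ is not continuous at the endpoints of $K$, which is why the $\E\;\eta_c(\partial K)=0$ condition is invoked) and that the tail estimate \eqref{cond.sum} transfers from an $L^1$-bound to the required probabilistic negligibility uniformly in $n$ as $\eps\to0$. The identification $\int_0^1 t\,\eta_c(dt) =_d D_c$ is the known fact recalled in the introduction and needs no further argument. Everything else — finiteness of $D_c$, non-atomicity of the intensity of $\eta_c$ — is routine.
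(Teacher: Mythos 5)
Your argument is correct and is precisely the one the paper intends: the lemma is stated there as a direct consequence of Kallenberg's Theorem~4.28 (the ``convergence together'' lemma) and Lemma~\ref{conv.discont}, which is exactly the truncation-plus-approximation scheme you carry out, with \eqref{cond.sum} and Markov's inequality supplying the uniform negligibility of the small-point contribution. The only cosmetic point is that to match the hypotheses of Lemma~\ref{conv.discont} literally you should take $f_\eps(t)=t\,\mathds{1}_{[\eps,1]}(t)$ with $K=[\eps,1]$, so that $f_\eps$ restricted to $K$ is continuous and the boundary condition $\E\,\eta_c(\{\eps,1\})=0$ does the rest; the discrepancy with your half-open version is absorbed into the error term.
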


\begin{lemma}[Continuity of $M$ restricted to simple counting measures]\label{lem:contT}
	Let $(\xi_n)_{n\in\N}$ be a sequence of counting measures (deterministic) in
	$\sN((0,\infty))$ such that $\xi_n$ converges vaguely to $\xi$ as $n \to \infty$ for a
	simple counting measure $\xi$. If $M$ is given by \eqref{eq:T}, then $M(\xi_n)$ converges vaguely to $M(\xi)$ as $n \to \infty$.
\end{lemma}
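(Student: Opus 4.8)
To prove this continuity statement, the plan is to verify vague convergence $M(\xi_n)\to M(\xi)$ straight from the definition, i.e.\ to show $M(\xi_n)g\to M(\xi)g$ for every $g\in\widehat C_{\N_0\times(0,\infty)}$. Since $\N_0$ carries the discrete topology, the support of any such $g$ is contained in a set $\{0,\dots,N\}\times[\delta,1/\delta]$, and I may take $N$ large and $\delta$ small; in particular I choose $\delta$ so that neither $\delta$ nor $1/\delta$ is an atom of $\xi$, which is possible because $\xi$ has only countably many atoms. Writing $\xi=\sum_j \delta_{z_j}$ with distinct $z_j$ (here simplicity of $\xi$ is used) and $\xi_n=\sum_k a_k^n \delta_{z_k^n}$ in canonical form, one has $M(\xi)=\sum_j \delta_{(1,z_j)}$ and $M(\xi_n)=\sum_k \delta_{(a_k^n,z_k^n)}$; a one-line check that every relatively compact subset of $\N_0\times(0,\infty)$ lies in a product $\{0,\dots,N\}\times[\delta,1/\delta]$ confirms both are locally finite, so the statement is meaningful.

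The heart of the argument is a local analysis on the fixed window $[\delta,1/\delta]$. Let $z_1,\dots,z_p$ be the finitely many atoms of $\xi$ in $[\delta,1/\delta]$, and enclose them in pairwise disjoint open intervals $I_i\ni z_i$ with $\overline{I_i}\subset(\delta,1/\delta)$, with endpoints that are not atoms of $\xi$, and with $I_i$ containing no $z_j$ for $j\ne i$; set $C=[\delta,1/\delta]\setminus\bigcup_{i=1}^p I_i$, a compact set with $\xi(C)=0$. I would then invoke the standard portmanteau consequences of vague convergence on a locally compact second countable space (see e.g.\ \cite{kalle17}): $\limsup_n\xi_n(C)\le\xi(C)=0$ for the compact $C$, and $\xi_n(I_i)\to\xi(I_i)=1$ for each $I_i$, whose boundary is $\xi$-null. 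As all these quantities are non-negative integers, there is $n_0$ with $\xi_n(C)=0$ and $\xi_n(I_i)=1$ for every $i$ once $n\ge n_0$; hence for $n\ge n_0$ the atoms of $\xi_n$ in $[\delta,1/\delta]$ are exactly $p$ distinct points $z_i^n\in I_i$, each of multiplicity one. Therefore $M(\xi_n)g=\sum_{i=1}^p g(1,z_i^n)$ for $n\ge n_0$, while $M(\xi)g=\sum_{i=1}^p g(1,z_i)$.

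It remains to show $z_i^n\to z_i$, which I would obtain by rerunning the same integer-valuedness argument on shrinking intervals $I_i'\ni z_i$ with $\overline{I_i'}\subset I_i$ and $\xi$-null boundary: eventually $\xi_n(I_i')=1$, forcing the unique atom of $\xi_n$ in $I_i$ to lie in $I_i'$, and letting $I_i'$ shrink to $\{z_i\}$ gives $z_i^n\to z_i$. Continuity of $g(1,\cdot)$ on $(0,\infty)$ then yields $M(\xi_n)g=\sum_i g(1,z_i^n)\to\sum_i g(1,z_i)=M(\xi)g$, which is exactly vague convergence of $M(\xi_n)$ to $M(\xi)$.

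The step I expect to be the crux, and the only one requiring genuine care, is the extraction of the eventual local structure of $\xi_n$ near each $z_i$; this is precisely where simplicity of $\xi$ is indispensable, since it is the integrality of $\xi(I_i)=1$ that pins the integer $\xi_n(I_i)$ to $1$ and so rules out split or high-multiplicity atoms of $\xi_n$ inside $I_i$. For a non-simple limit the map $M$ is genuinely discontinuous --- for instance $\delta_{z-1/n}+\delta_{z+1/n}\to 2\delta_z$ vaguely, while the images under $M$ converge to $2\delta_{(1,z)}\ne\delta_{(2,z)}=M(2\delta_z)$ --- so the hypothesis cannot be dropped. Everything else is routine bookkeeping with vague convergence.
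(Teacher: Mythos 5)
Your proof is correct, and its crux coincides with the paper's: surround each atom $z_i$ of the simple limit by a small interval $I_i$, use $\xi_n(I_i)\to\xi(I_i)=1$ together with integer-valuedness to force $\xi_n$ to place exactly one unit of mass there, and thereby rule out atoms of multiplicity $\ge 2$. The packaging differs, though. The paper certifies vague convergence of $M(\xi_n)$ by checking convergence of the counts $M(\xi_n)\bigl([k,\infty)\times[a,b]\bigr)$, treating $k\le 1$ (where this equals $\xi_n([a,b])$) and $k>1$ (where the partition into the $I_i$ kills the limit) separately; it never needs to locate the atoms of $\xi_n$. You instead test against functions $g\in\widehat{C}_{\N_0\times(0,\infty)}$ and prove the strictly stronger local statement that for large $n$ each $I_i$ carries exactly one simple atom $z_i^n$ of $\xi_n$ with $z_i^n\to z_i$, after which $M(\xi_n)g=\sum_i g(1,z_i^n)\to\sum_i g(1,z_i)=M(\xi)g$. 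Your route is a bit longer but more careful on two points the paper glosses over: you explicitly choose the window endpoints and the $\partial I_i$ to be $\xi$-null before invoking $\xi_n(I_i)\to\xi(I_i)$, and you add the compact ``gap'' set $C$ with $\limsup_n\xi_n(C)\le\xi(C)=0$ to control $\xi_n$ away from the atoms. The counterexample $\delta_{z-1/n}+\delta_{z+1/n}\to 2\delta_z$ is a nice confirmation that simplicity of the limit cannot be dropped.
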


\begin{proof}
	Denote $\widetilde \xi_n=M(\xi_n)$ and $\widetilde \xi=M(\xi)$. Note
	that it suffices to show that, for all $0<a<b<\infty$ and $k \in
	\N_0$, 
	\begin{equation}
	\label{eq:Tconv}
	\widetilde \xi_n([k,\infty) \times [a,b])
	\to \widetilde \xi([k,\infty) \times [a,b])\quad \text{as }\; n\to\infty.
	\end{equation}
	Since $\xi$ is simple, 
	\begin{displaymath}
	\widetilde \xi([k,\infty) \times [a,b])=\begin{cases}
	\xi([a,b]) \quad \text{for $k=0,1$},\\
	0 \qquad\quad\;\;\; \text{for $k>1$}.
	\end{cases}
	\end{displaymath}
	Note that $\widetilde \xi_n([k,\infty) \times [a,b])=\xi_n([a,b])$
	for $k=0,1$. Hence, \eqref{eq:Tconv} holds for $k=0,1$ by our
	assumption that $\xi_n \to \xi$ as $n \to \infty$. Fix $k>1$. Let
	$\xi([a,b])=m$ for some $m \ge 0$. If $m=0$, by our assumption we
	have $\xi_n([a,b]) \to \xi([a,b])=0$ as $n \to \infty$, which yields
	$\widetilde \xi_n([k,\infty) \times[a,b]) \to 0$ as $n \to \infty$,
	showing \eqref{eq:Tconv}. Next, assume that $m \ge 1$. Since $\xi$ is
	a locally finite counting measure, there are disjoint intervals
	$(I_i)_{1 \le i \le m}$ such that $\xi(I_i)=1$ for $1 \le i \le m$
	and $\cup_{i=1}^m I_i=[a,b]$. By our assumption, $\xi_n(I_i) \to
	\xi(I_i)=1$ as $n \to \infty$ for $1 \le i \le m$. Since $k>1$, we
	have $\widetilde \xi_n([k,\infty) \times I_i) \to 0$ as $n \to
	\infty$.  Taking union over the $m$ sets $[k,\infty) \times I_i$, $1
	\le i \le m$ proves \eqref{eq:Tconv}, concluding the proof.
\end{proof}

\section*{Acknowledgements}
We thank Matthias Schulte for a number of useful comments on the manuscript that greatly improved the paper. We would also like to thank Richard Arratia for pointing out the possible connection between convergence of sums and the convergence of underlying point processes.

\end{document}